\documentclass[12pt]{amsart}

\usepackage{latexsym,amsfonts,amssymb,epsfig,verbatim}
\usepackage{amsmath,amsthm,amssymb,latexsym,graphics,textcomp}
\usepackage{eucal,eufrak}
\usepackage{graphicx,pinlabel}
\input{xy}
\xyoption{all}
\input{epsf.tex}

\newcommand{\nc}{\newcommand}
\nc{\nt}{\newtheorem}
\nc{\dmo}{\DeclareMathOperator}

\theoremstyle{plain}
\newtheorem{theorem}{Theorem}[section]
\newtheorem{lemma}[theorem]{Lemma}
\newtheorem{corollary}[theorem]{Corollary}

\newtheorem{proposition}[theorem]{Proposition}

\nc{\R}{\mathbb R}
\nc{\C}{\mathbb C}
\nc{\Z}{\mathbb Z}
\nc{\N}{\mathbb N}

\nc{\F}{\mathcal F}

\dmo{\ext}{ext}
\renewcommand{\int}{\mathrm{int}}
\dmo{\Area}{Area}
\nc{\G}{\mathcal{G}}
\def\PSL{\mathrm{PSL}}
\def\TT{\mathcal{T}}
\def\T{\mathrm{Teich}}
\dmo{\Mod}{Mod}
\nc{\M}{\mathcal{M}}
\nc{\inj}{\mathrm{inj}}

\nc{\q}{\bf q}

\nc{\p}[1]{\medskip\paragraph{{\em #1}}}
\nc{\margin}[1]{\marginpar{\scriptsize #1}}

\title[Short geodesics in moduli space]{On the number and location of short geodesics in moduli space}

\begin{document}

\author{Christopher J.~Leininger and Dan Margalit}

\address{Dan Margalit \\ School of Mathematics\\ Georgia Institute of Technology \\ 686 Cherry St. \\ Atlanta, GA 30332 \\  margalit@math.gatech.edu}

\address{
Christopher J. Leininger\\ Dept. of Mathematics, University of Illinois at Urbana-Champaign \\ 273 Altgeld Hall, 1409 W. Green St. \\ Urbana, IL 61802\\ clein@math.uiuc.edu}

\thanks{The authors gratefully acknowledge support from the National
Science Foundation and the Sloan Foundation}

\keywords{moduli space, small dilatation}

\begin{abstract}
A closed Teichm\"uller geodesic in the moduli space $\M_g$ of Riemann surfaces of genus $g$ is called {\em $L$-short}, if it has length at most $L/g$.  We show that for any $L > 0$ there exist $\epsilon_2 > \epsilon_1 > 0$, independent of $g$, so that the $L$-short geodesics in $\M_g$ all lie in the intersection of the $\epsilon_1$-thick part and the $\epsilon_2$-thin part.  We also estimate the number of $L$-short geodesics in $\M_g$, bounding this from above and below by polynomials in $g$ whose degrees depend on $L$ and tend to infinity as $L$ does.
\end{abstract}

\maketitle

\vspace{-.15in}

\section{Introduction}

Given $g \geq 1$, let $\M_g$ denote the moduli space of Riemann surfaces of genus $g$ equipped with the Teichm\"uller metric.  For any $L > 0$, we define
\[ \G_g(L) = \{ \text{closed geodesics in } \M_g \text{ of length at most } L/g \}. \]
We refer to the elements of $\G_g(L)$ as \emph{$L$-short geodesics}, or \emph{short geodesics} for short.

Ivanov \cite{Iv} and Arnoux--Yoccoz \cite{AY} showed that the set $\G_g(L)$ is finite for every $g \geq 1$ and $L > 0$.
Penner \cite{Pe} proved that there exists an $L_0$ so that $\G_g(L)$ is nonempty for all $g \geq 1$ and $L > L_0$.  In fact, Hironaka \cite{Hir} showed that we can take $L_0 = \log((3+\sqrt{5})/2) \approx 0.962$ for sufficiently large $g$; see also \cite{AD,KT}.

Given an interval $I \subset (0,\infty)$, let $\M_{g,I}$ be the subset of $\M_g$ consisting of those hyperbolic surfaces (Euclidean surfaces in the case $g=1$) in which the length of the shortest essential closed curve lies in $I$.  For example, the sets $\M_{g,(0,\epsilon]}$ and $\M_{g,[\epsilon,\infty)}$ are often called the \emph{$\epsilon$-thin part} and \emph{$\epsilon$-thick part} of $\M_g$, respectively.

Our first theorem provides a coarse description of the location of the set of short geodesics in $\M_g$. 

\begin{theorem}
\label{T:location}
Given $L > 0$ there exists $\epsilon_2 > \epsilon_1 > 0$ so that each element of $\G_g(L)$ lies in $\M_{g,[\epsilon_1,\epsilon_2]}$ for all $g \geq 1$.
\end{theorem}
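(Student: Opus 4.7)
The plan is to handle the two bounds on the systole separately, working on the lift $\tilde{\mathcal{A}} \subset \T_g$ of the $L$-short closed geodesic, i.e., the axis of a pseudo-Anosov $\phi$ with $\log\lambda(\phi) \le L/g$.

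For the lower bound, the key ingredients are three: the $\phi$-invariance of $\tilde{\mathcal{A}}$ with translation length $\log\lambda$; the log-Lipschitz estimate $|\log\text{Ext}_{X_t}(\gamma) - \log\text{Ext}_{X_s}(\gamma)| \le 2|t-s|$ for extremal length along a Teichm\"uller geodesic, together with Maskit's comparison of hyperbolic length with $\sqrt{\text{Ext}}$ in the thin part; and the Margulis lemma. Let $\delta$ be the infimum of the hyperbolic systole on $\tilde{\mathcal{A}}$, attained at some $X_{t_0}$ by a curve $\gamma$. The log-Lipschitz estimate shows that $\gamma$ has hyperbolic length at most the Margulis constant $\mu$ on an interval of $t$-parameter of total length at least $c\log(\mu/\delta)$ for a universal $c>0$. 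By $\phi$-invariance, each translate $\phi^n(\gamma)$ is likewise $\mu$-short on the window of the same size centered at $t_0+n\log\lambda$, and these curves are pairwise distinct since $\phi$ is pseudo-Anosov. At any fixed time $t$, at least $\lfloor c\log(\mu/\delta)/\log\lambda\rfloor$ curves in the $\phi$-orbit of $\gamma$ are simultaneously $\mu$-short at $X_t$, hence pairwise disjoint by the Margulis lemma. Since a genus $g$ surface carries at most $3g-3$ disjoint essential simple closed curves, this forces $c\log(\mu/\delta) \le 3g\log\lambda \le 3L$, yielding $\delta \ge \mu\, e^{-3L/c}$, the desired $\epsilon_1$.

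For the upper bound, normalize the quadratic differential associated to $\tilde{\mathcal{A}}$ to have unit area. At any point $X_t \in \tilde{\mathcal{A}}$, a packing argument at the $4g-4$ zero singularities produces an essential simple closed geodesic of flat length at most $C/\sqrt{g}$, a quantity which tends to zero with $g$. The main obstacle is converting this flat-short curve into a hyperbolically short one, since flat and hyperbolic lengths are not a priori comparable. One route is to bound the modulus of a maximal embedded annulus about the flat-systolic curve, giving control on its extremal length and hence on its hyperbolic length via Minsky's product-regions theorem; a more structural alternative is to invoke the Farb--Leininger--Margalit classification of small-dilatation pseudo-Anosovs, which realizes the axis as arising by Dehn filling from a finite list of pseudo-Anosov axes on cusped surfaces, forcing the thin-part excursions to be uniformly bounded in $L$. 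For finitely many small $g$ the claim reduces to the finiteness of $\G_g(L)$ (Ivanov and Arnoux--Yoccoz), so only the large $g$ regime requires either of these routes.

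I expect the upper bound to be the main obstacle: the lower bound is essentially combinatorial once one knows extremal length is Lipschitz along Teichm\"uller geodesics, while the upper bound requires delicate geometric input to bridge flat and hyperbolic geometry (or an appeal to the structure theory of small-dilatation pseudo-Anosovs).
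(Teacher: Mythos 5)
Your lower bound (the $\epsilon_1$ half) is essentially correct and runs on the same mechanism as the paper's Proposition~\ref{P:thick}: there, the collar lemma (Lemma~\ref{L:collar}) and Wolpert's inequality (Proposition~\ref{P:wolpert}) are played off against the fact that some iterate $\phi^k(\gamma)$ with $k\leq 3g-2$ must cross the systole $\gamma$, giving $F(\ell_X(\gamma))\leq e^{3L}\ell_X(\gamma)$ and hence a uniform $\epsilon_1$; your count of simultaneously Margulis-short translates in a window of the axis is a reformulation of the same ``$3g-3$ disjoint curves versus $3g\cdot L/g$'' tension. (Minor slip: Maskit compares hyperbolic length with extremal length, not with its square root, but nothing depends on this, and Wolpert's inequality lets you bypass extremal length entirely.)

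The genuine gap is the $\epsilon_2$ half, $\G_g(L)\subset\M_{g,(0,\epsilon_2]}$, and neither of your routes closes it. Route one cannot work even in principle because it never uses the hypothesis $\log\lambda(\phi)\leq L/g$: a unit-area quadratic differential of large genus does carry an essential curve of small flat length (via the systolic inequality; note your packing argument at the zeros only yields a short saddle connection, whose neighborhood boundary is null-homotopic), but small flat length does not force small extremal or hyperbolic length, and there is no reason the maximal annulus about the flat systole has large modulus. Concretely, pull a fixed unit-area quadratic differential back to large unramified covers and rescale to unit area: the flat systole tends to $0$ while the hyperbolic systole is at least that of the base, so these surfaces stay conformally thick. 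Thinness is thus a consequence of small dilatation, not of flat geometry in large genus, so any correct argument must use $L$ essentially. Route two names the right input, the finiteness theorem (Theorem~\ref{T:universal}), but as phrased (``thin-part excursions uniformly bounded'') it describes the thick bound again, and it omits all of the work actually required: for each of the finitely many cusped manifolds $M\in\TT(L)$ one must produce an essential curve that is uniformly hyperbolically short on the infinitely many fibers, of genus tending to infinity, whose monodromies are $L$-short. The paper does this by (i) using Fried's theorems to replace each fibered face $F$ by a compact $K\subset F$ capturing all such fibers, (ii) using McMullen's continuously varying quadratic differentials $\widetilde \q(\eta)$ on the fixed cover $\widetilde S$ (Theorem~\ref{T:mcmullen}) to find one curve $\alpha\subset\widetilde S$ whose projection has uniformly bounded hyperbolic length on every fiber over $K$, and (iii) -- the point your proposal does not address -- handling the possibility that this curve becomes \emph{inessential} when the deleted singular points are filled back in to recover the closed surface; this is resolved by induction on the number of cusps using Thurston's Dehn surgery theorem and Corollary~\ref{C:Dehn Surgery Corollary}, since only finitely many filling slopes can kill a given element of $\pi_1(M)$. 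Without (i)--(iii), the appeal to the finiteness theorem does not yield $\epsilon_2$.
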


Our second and third theorems concern the number of short geodesics in $\M_g$, counted as a function of $g$: the number of $L$-short geodesics in $\M_g$ is bounded from above and below by polynomials in $g$ whose degrees depend on $L$ and tend to infinity as $L$ does.

\begin{theorem}
\label{T:count}
Given $L > 0$ there exists a polynomial $P_L(g)$ so that
\[ \left| \G_g(L) \right| \leq P_L(g) \]
for all $g \geq 1$.
\end{theorem}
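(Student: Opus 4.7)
The length of a closed Teichm\"uller geodesic in $\M_g$ equals $\log\lambda(\phi)$ where $\phi \in \Mod(\Sigma_g)$ is the corresponding pseudo-Anosov mapping class (determined up to conjugacy). Hence an $L$-short geodesic corresponds to a conjugacy class of pseudo-Anosov $\phi$ with $\lambda(\phi)^{|\chi(\Sigma_g)|} \leq e^{(L/g)(2g-2)} \leq e^{2L}$, a bound uniform in $g$. The task reduces to bounding the number of such conjugacy classes by a polynomial in $g$.

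The plan is to use Agol's theorem on small-dilatation pseudo-Anosovs, which refines earlier work of Farb--Leininger--Margalit: for any constant $C$, there is a finite collection of compact 3-manifolds $N_1,\ldots,N_k$ with torus boundary such that every pseudo-Anosov $\phi$ on a closed surface satisfying $|\chi|\log\lambda(\phi) \leq C$ has mapping torus $M_\phi$ obtained by Dehn filling some subset of cusps of some $N_i$. Theorem~\ref{T:location} provides the geometric input to confirm this hypothesis, since an axis confined to the thick-thin part of $\T(\Sigma_g)/\Mod(\Sigma_g)$ produces a mapping torus with a controlled thick part.

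Having reduced to a finite list $\{N_i\}$, I count conjugacy classes via two successive polynomial-in-$g$ bounds. First, by Thurston's theorem each fibration of $N_i$ (or of one of its Dehn fillings) is specified by an integer cohomology class in a fibered cone of the Thurston norm ball in $H^1(N_i;\R)$, and the Thurston norm of such a class equals $|\chi(\text{fiber})| = 2g-2$; the number of lattice points on this hyperplane slice of a polyhedral cone in $\R^{b_1(N_i)}$ is $O(g^{b_1(N_i)-1})$. Second, the number of Dehn filling slopes on each cusp producing a closed fiber of genus $g$ contributes an additional polynomial factor whose degree is controlled by the number of cusps. Summing over the finite collection gives the polynomial $P_L(g)$, whose degree grows with $L$ via $\max_i b_1(N_i)$ and the maximal cusp count appearing in the list for $C = 2L$.

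The main obstacle is the accounting: verifying that every $L$-short conjugacy class in $\Mod(\Sigma_g)$ is captured by the Agol construction---this is precisely where Theorem~\ref{T:location} plays a crucial role, since the thick-thin location prevents the mapping torus from degenerating off the finite list---and that the map from lattice classes and Dehn slopes to conjugacy classes overcounts by at most a bounded factor. A subtler point is that the degree of $P_L$ must tend to infinity with $L$, which requires that $b_1(N_i)$ and the cusp number grow without bound as $C$ does, consistent with the polynomial lower bound to follow.
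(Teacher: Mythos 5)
Your overall strategy---reduce to a finite universal list of $3$-manifolds and then count lattice points with respect to the Thurston norm---is the same skeleton as the paper's argument, but as written there are genuine gaps. First, the Dehn-filling bookkeeping is asserted, not proved: if you take the version of the finiteness theorem in which the closed mapping torus $M_\phi$ is a filling of some cusped $N_i$, then a genus-$g$ fibration of the \emph{filled} manifold is not an integral class in $H^1(N_i;\R)$, and the set of filling slopes on a cusp is a priori infinite, so the claim that the relevant slopes contribute only ``an additional polynomial factor whose degree is controlled by the number of cusps'' has no justification. Second, the appeal to Theorem~\ref{T:location} is misplaced: the hypothesis of the finiteness theorem is just the bound $|\chi(S_g)|\log\lambda(\phi)\le 2L$, which is immediate from $\log\lambda(\phi)\le L/g$, and in the paper both Theorem~\ref{T:location} and Theorem~\ref{T:count} are deduced from Theorem~\ref{T:universal}, not the reverse; nothing about thick--thin location is needed here. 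Third, your norm computation conflates the closed and punctured fibers: the fiber of the cusped manifold is the surface with the singular points removed, whose norm is $|\chi(S')|\le 6g-6$ (at most $4g-4$ singularities by Poincar\'e--Hopf), not $2g-2$.

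The paper avoids all of this by using Theorem~\ref{T:universal} in the form that the \emph{punctured} mapping tori themselves form a finite set $\TT(L)$, so no Dehn-filling count is needed: each $\phi\in\Psi_g(L)$, after deleting singularities, is the monodromy of a fiber of some $M\in\TT(L)$, i.e.\ an integral class of Thurston norm at most $6g-6$, and---this is the step your proposal is missing---the restricted map $\phi'$ determines $\phi$ up to conjugacy, so counting integral classes in the norm ball of radius $6g-6$ (a polynomial of degree $b_1(M)$) already bounds $|\Psi_g(L)/\Mod(S_g)|=|\G_g(L)|$. Note also that for an upper bound the possible overcounting you worry about is harmless (you only need each short conjugacy class to be hit), and the requirement that the degree tend to infinity with $L$ belongs to the lower-bound statement, Theorem~\ref{T:count2}, not to this theorem. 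If you replace your Agol-with-fillings reduction by the punctured-mapping-torus statement of Theorem~\ref{T:universal}, add the Poincar\'e--Hopf bound, and record the determinacy of $\phi$ by $\phi'$, your argument becomes the paper's proof.
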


\begin{theorem}
\label{T:count2}
Given $d > 0$, there exists a polynomial $Q_d(g)$ of degree $d$, with positive leading coefficient, and $L>0$ so that
\[ \left| \G_g(L) \right| \geq Q_d(g) \]
for all $g \geq 1$.
\end{theorem}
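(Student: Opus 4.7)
The plan is to realize the $L$-short geodesics in $\M_g$ as closed Teichm\"uller geodesics of pseudo-Anosov mapping classes arising as monodromies of many different fibrations of a single closed hyperbolic $3$-manifold $N$. To obtain a polynomial lower bound of degree $d$, I would arrange $N$ so that the Thurston norm unit ball on $H^1(N;\R)$ has a fibered face $\F$ of dimension exactly $d$ (equivalently, $b_1(N) = d+1$ and $\F$ is a top-dimensional face on which every integral class is a fiber class). By Thurston's fibered face theorem, every primitive integral class $\alpha$ in the open cone $C_\F = \R_{>0}\cdot \F^\circ$ represents a fibration $N \to S^1$ with connected closed fiber $S_\alpha$ of genus $g_\alpha$ satisfying $\|\alpha\| = 2g_\alpha - 2$, and since $N$ is hyperbolic the monodromy $\phi_\alpha \in \Mod(S_\alpha)$ is pseudo-Anosov.

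By Fried's theorem, the normalized entropy $\alpha \mapsto \|\alpha\|\cdot\log\lambda(\phi_\alpha)$ descends to a continuous function on $\F^\circ$, and so is bounded above by some $L_0 > 0$ on any compact subcone $K \subset C_\F$ whose projection to $\F$ is compactly contained in $\F^\circ$. Hence every primitive integral $\alpha \in K$ gives
\[
\log\lambda(\phi_\alpha) \;\leq\; \frac{L_0}{\|\alpha\|} \;=\; \frac{L_0}{2g_\alpha-2},
\]
that is, a closed geodesic in $\M_{g_\alpha}$ of length at most $L/g_\alpha$ for some $L$ depending only on $L_0$. For each large $g$, the slice $\{\alpha \in K : \|\alpha\| = 2g-2\}$ is a $d$-dimensional convex region of linear size $\asymp g$, so contains $\asymp g^d$ integer lattice points, a positive proportion of which are primitive. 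To upgrade this to a count of distinct geodesics in $\M_g$, I would argue that if $\phi_\alpha$ and $\phi_{\alpha'}$ are conjugate in $\Mod(\Sigma_g)$ then their mapping tori are both canonically identified with $N$, so by Mostow rigidity $\alpha'$ is the pullback of $\pm\alpha$ under a self-homeomorphism of $N$; since $\mathrm{Isom}(N)$ is finite, each mapping-class conjugacy class is produced by at most $|\mathrm{Isom}(N)|$ primitive classes. Thus $|\G_g(L)| \geq c\, g^d$ for all sufficiently large $g$, and subtracting a constant absorbs the small-$g$ cases into the required polynomial $Q_d$.

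The main obstacle is the geometric-topology input in the first paragraph: producing, for each $d \geq 1$, a closed hyperbolic fibered $3$-manifold $N$ whose Thurston norm ball admits a top-dimensional fibered face of dimension $d$. Closed hyperbolic $3$-manifolds with arbitrarily large $b_1$ are available via finite covers (using, for instance, Agol's virtual fibering theorem together with $b_1$-growth in suitable towers), but arranging that the full $(d+1)$-dimensional cone over the chosen face consists of fibered classes with closed connected fiber—so that the lattice-point count actually produces pseudo-Anosovs on \emph{closed} surfaces of each genus $g$—requires more careful control of the shape of the Thurston norm polytope. Once such an $N$ is in hand, the Fried upper bound, the lattice-point count on the codimension-$1$ slice, and the Mostow-rigidity distinguishing step are all comparatively routine.
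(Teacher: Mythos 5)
Your outline follows the same route as the paper's proof: fix a single closed hyperbolic fibered $3$-manifold, bound the normalized dilatation $\|\cdot\|\log\Lambda_F$ on the cone over a compact subset of a fibered face using Fried's theorem (Theorem~\ref{T:frieddilatation}), count primitive integral classes on the slice of Thurston norm $2g-2$, and control repetitions by a finiteness-of-symmetries argument (the paper uses finiteness of the isometry group of the polyhedral norm ball, you invoke Mostow rigidity and finiteness of $\mathrm{Isom}(N)$; these are interchangeable here). However, the step you defer as ``the main obstacle'' is precisely the mathematical content of the theorem, and there is a second issue your proposal does not identify at all: the divisibility of the Thurston norm on integral classes. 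Theorem~\ref{T:thurston} only guarantees that the linear functional $\psi$ computing the norm on the fibered cone satisfies $\psi(H^1(N;\Z))\subseteq 2\Z$. If the image were $2k\Z$ with $k>1$, the slice $\{\|\alpha\|=2g-2\}$ would contain no integral classes unless $k$ divides $g-1$, so infinitely many genera would receive no contribution from $N$ and no polynomial lower bound valid for all $g$ could be extracted from your construction (subtracting a constant repairs only finitely many genera). Consequently your assertion that the slice of norm $2g-2$ contains on the order of $g^d$ lattice points for \emph{every} large $g$ is unjustified as written: a rational affine slice of $\R^{d+1}$ of size comparable to $g$ meets the lattice only when $2g-2$ lies in $\psi(\Z^{d+1})$.

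The paper resolves both points at once in Lemma~\ref{L:big bundle small norm}, which is the heart of the argument: for even $d\geq 4$ one takes $S$ of genus $d/2$ and an explicit pseudo-Anosov $\phi$, produced by Thurston's construction from Dehn twists about one pair of homologous nonseparating curves and a family of separating curves, so that $\phi$ acts trivially on $H_1(S;\R)$ and hence $b_1(M_\phi)=d+1$ with a $d$-dimensional fibered face; and one certifies $\psi(H^1(M_\phi;\Z))=2\Z$ by pushing the genus-one subsurface cobounded by $\gamma$ and $\phi(\gamma)$ along the suspension flow to obtain a closed genus-two surface transverse to the flow, whose pairing with the relative Euler class gives $\psi$-value $2$. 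Without this construction---or a substitute that pins down both the dimension of a fibered face and the exact integral image of $\psi$, which appeals to virtual fibering and $b_1$-growth do not by themselves provide---the proposal does not yet prove the theorem; the remaining lattice-point count and the rigidity step are indeed routine and agree with the paper.
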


\noindent {\bf Short geodesics as small-dilatation pseudo-Anosov mapping classes.} Let $S_g$ denote a closed, connected, orientable surface of genus $g$.  We will now give an interpretation of $\G_g(L)$ that is intrinsic to $S_g$.  For more details on pseudo-Anosov homeomorphisms, the mapping class group, and Teichm\"uller space, see \cite{primer}.

A homeomorphism $\phi : S_g \to S_g$ is \emph{pseudo-Anosov} if there are measured singular foliations $(\F_+,\mu_+)$ and $(\F_-,\mu_-)$, called the stable and unstable measured foliations, and a real number $\lambda(\phi) > 1$, called the \emph{dilatation}, so that
\[ \phi(\F_+,\mu_+) = \lambda(\phi)(\F_+,\mu_+) \quad \text{and} \quad \phi(\F_-) = \lambda(\phi)^{-1}(F_-,\mu_-). \]
When $g=1$, the foliations $\F_+$ and $\F_-$ are nonsingular, and $\phi$ is usually called \emph{Anosov}.  For ease of exposition, we will consider Anosov homeomorphisms to also be pseudo-Ansoov.

The mapping class group $\Mod(S_g)$ is the group of homotopy classes of homeomorphisms of $S_g$.  An element of $\Mod(S_g)$ is pseudo-Anosov if it has a pseudo-Anosov representative.  

There is a natural action of $\Mod(S_g)$ on Teichm\"uller space $\T(S_g)$, the space of isotopy classes of hyperbolic metrics on $S_g$, and the quotient is nothing other than moduli space:
\[ \M_g = \T(S_g)/\Mod(S_g). \]
The Teichm\"uller distance between two points of $\T(S_g)$ is $\log(K)/2$, where $K$ is the quasiconformal distortion between the two corresponding metrics on $S_g$, minimized over all representatives of the respective isotopy classes.  The group $\Mod(S_g)$ acts on $\T(S_g)$ properly discontinuously by isometries, and so there is an induced metric on $\M_g$, as above.

Each pseudo-Anosov $[\phi]$ in $\Mod(S_g)$ acts on $\T(S_g)$ by translation along a geodesic axis.  The translation length of $[\phi]$ is precisely $\log(\lambda(\phi))$ and the quotient of the axis descends to a closed geodesic in $\M_g$ of length $\log(\lambda(\phi))$.  Furthermore, conjugate pseudo-Anosov mapping classes define the same closed geodesic, and, moreover, every closed geodesic in $\M_g$ arises in this way.

We define the set of small dilatation pseudo-Anosov mapping classes as
\[ \Psi_g(L) = \{ [\phi] \in \Mod(S_g) \mid \phi \mbox{ is pseudo-Anosov and } \log(\lambda(\phi)) \leq L/g \}. \]
By the previous paragraph, there is a bijection between $\G_g(L)$ and $\Psi_g(L)/\Mod(S_g)$, the set of $\Mod(S_g)$-conjugacy classes of elements of $\Psi_g(L)$:
\[ \G_g(L) \leftrightarrow \Psi_g(L)/\Mod(S_g).\]
As such, both of our main theorems can be rephrased as statements about the set of real numbers $\lambda$ that arise as dilatations of pseudo-Anosov homeomorphisms of $S_g$.

\bigskip

\noindent {\bf Small dilatations and 3-manifolds.} Theorems~\ref{T:location} and~\ref{T:count} will be deduced from a finiteness result proven by the authors with Benson Farb, which we now recall.  Consider the set of all small dilatation pseudo-Anosov mapping classes of all closed surfaces:
\[ \Psi(L) = \bigcup_{g \geq 1} \Psi_g(L).\]
For each element of $\Psi(L)$, we define a new pseudo-Anosov homeomorphism by removing the singularities of the stable and unstable foliations and taking the restriction.  Let $\TT(L)$ denote the set of mapping tori that arise from these modified pseudo-Anosov maps, considered up to homeomorphism.

We have the following theorem; see \cite[Theorem 1.1]{FLM} and \cite[Theorem 6.2]{Ag}.

\begin{theorem} \label{T:universal}
For all $L > 0$, the set $\TT(L)$ is finite.
\end{theorem}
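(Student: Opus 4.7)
The plan is to bound the hyperbolic volumes of the cusped mapping tori $M^o_\phi$ uniformly in $L$, and then exploit the fibered structure of these manifolds to prevent an accumulating sequence.

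For $g \geq 2$, Thurston's hyperbolization theorem endows the mapping torus $M_\phi$ with a hyperbolic structure in which the singular orbits of $\phi$ are closed geodesics; removing them produces a finite-volume cusped hyperbolic 3-manifold $M^o_\phi$. The ``normalized entropy'' is controlled: $|\chi(S_g)|\log \lambda(\phi) \leq (2g-2)(L/g) < 2L$. A Kojima--McShane-type estimate, bounding the hyperbolic volume of a fibered cusped 3-manifold by a universal multiple of the normalized entropy of its monodromy, then yields $\text{vol}(M^o_\phi) \leq V_0$ for some $V_0 = V_0(L)$. The $g = 1$ case yields only finitely many Sol torus bundles and can be treated separately.

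Suppose for contradiction that $\TT(L)$ is infinite, giving infinitely many distinct cusped hyperbolic 3-manifolds of volume at most $V_0$. By J\o rgensen--Thurston compactness, one extracts a subsequence $M^o_{\phi_i}$ converging geometrically to a finite-volume cusped hyperbolic 3-manifold $M_\infty$, with $M^o_{\phi_i}$ obtained from $M_\infty$ (for $i$ large) by hyperbolic Dehn filling on the cusps that do not persist in the limit. Each $M^o_{\phi_i}$ carries a distinguished primitive fibered cohomology class. By Thurston's fibered face theorem and the behavior of fibered faces under Dehn filling, these classes lift for $i$ large to primitive integer classes $\alpha_i$ in a single fibered cone of $H^1(M_\infty;\Z)$, with Thurston norm and dilatation matching the corresponding data of $[\phi_i]$.

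On that fibered cone, Fried's theorem gives a continuous extension of $\log \lambda$ and, via McMullen's concavity result, the normalized entropy $\|\cdot\|_T \log \lambda$ is continuous on the projective fibered face and blows up at its boundary. Hence the primitive integer classes $\alpha_i$ with normalized entropy at most $2L$ are confined to a compact region of the projective face, and so form a finite set --- contradicting the infinitude of the $M^o_{\phi_i}$. The main obstacle is the third paragraph: matching the fibered structures across the geometric limit and the Dehn filling so that Thurston norm and dilatation transport correctly, which requires careful bookkeeping and is the technical heart of the Farb--Leininger--Margalit and Agol arguments cited in \cite{FLM, Ag}.
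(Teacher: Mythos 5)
Note first that the paper does not prove Theorem~\ref{T:universal} at all: it imports it from \cite{FLM} and \cite{Ag}, whose arguments are quite different from yours (in \cite{FLM} one controls the geometry of the singular flat/Sol structure on the punctured mapping torus directly; in \cite{Ag} one bounds the number of tetrahedra in the veering triangulation in terms of the normalized dilatation). Your first two paragraphs are reasonable: $|\chi(S')|\log\lambda(\phi)\le 6L$ for the punctured fiber, so an entropy-versus-volume inequality does give a uniform bound on the volumes, and J{\o}rgensen--Thurston then produces a geometric limit $M_\infty$ with the $M^o_{\phi_i}$ obtained by Dehn filling. But bounded volume alone cannot finish the proof---there are infinitely many distinct fibered hyperbolic $3$-manifolds of uniformly bounded volume (for instance, fillings of a multi-cusped fibered manifold along boundary slopes of the infinitely many fibers in a fibered cone)---so everything rests on your third paragraph, and the claims made there are not just unproven but false as stated. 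The fibered class of $M^o_{\phi_i}$ does pull back to a class $\alpha_i\in H^1(M_\infty;\Z)$, but there is no reason $\alpha_i$ is a fibered class of $M_\infty$: the core geodesics of the filling solid tori need not be isotopic to curves transverse to the fibration, and if such a core pairs trivially with the fiber class then $\alpha_i$ restricts to zero on the corresponding cusp torus of $M_\infty$ and cannot be fibered (a fiber must meet every cusp). Even when $\alpha_i$ is fibered, ``Thurston norm and dilatation matching the data of $[\phi_i]$'' fails: drilling punctures the fiber, so both $\|\alpha_i\|$ and the monodromy---hence its dilatation---change. In the cited proofs the drilled curves are closed orbits of the suspension flow, which is exactly what makes such transport work; here the drilled curves are dictated by the geometric limit and you have no control over them. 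This is the substance of the theorem, not bookkeeping.

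The endgame is also a non sequitur even granting the transport. Boundedness of $\|\cdot\|\log\Lambda_F(\cdot)$ confines the projectivized classes to a compact subset $K$ of the open fibered face (Theorem~\ref{T:frieddilatation}), but it does not make the set of primitive integral classes finite: that function is constant on rays, so the cone $\R_+\cdot K$ contains infinitely many primitive integral points with the same normalized entropy bound---this is precisely the mechanism behind Penner's theorem \cite{Pe}, McMullen's proof of it recalled in Section~\ref{S:more thurston norm}, and Theorem~\ref{T:count2} of this paper. Moreover, even a finite set of classes $\{\alpha_i\}$ would not by itself contradict the existence of infinitely many distinct fillings $M^o_{\phi_i}$ of $M_\infty$; you would additionally need that each $\alpha_i$ restricts nontrivially to every filled cusp torus, so that the filling slope is forced to be the kernel direction of $\alpha_i$ on that torus, and nothing in the sketch establishes this. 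So the proposal, as written, has no working mechanism for ruling out the accumulating sequence; supplying one is exactly what the arguments of \cite{FLM} and \cite{Ag} do, by entirely different means.
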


Because of Theorem~\ref{T:universal}, it is enough to prove Theorems~\ref{T:location} and~\ref{T:count} for the pseudo-Anosov homeomorphisms corresponding to a single element of $\TT(L)$.  We can then take maxima and minima of all of the resulting bounds in order to obtain the theorems.

\bigskip

\noindent {\bf Acknowledgments.} We would like to thank Jayadev Athreya, Greg Blekherman, Martin Bridgeman, Jeff Brock, Dick Canary, Benson Farb, Richard Kent, Curt McMullen, and Maryam Mirzakhani for helpful conversations.

\section{The Thurston norm} \label{S:thurston norm}

Theorem~\ref{T:universal} allows us to realize $\G_g(L)$ as the union of a finite number of sets, namely, the short geodesics arising from the different fibers of the finite set of 3-manifolds $\TT(L)$.  In order to leverage this theorem effectively, we will need a way of organizing the elements of $\G_g(L)$ coming from a particular 3-manifold $M$ of $\TT(L)$.  The Thurston norm on $H^1(M;\R)$ is well-suited to this purpose.

Let $M \in \TT(L)$.  By definition, $M$ is equal to the mapping torus $M_\phi$, where $\phi : S \to S$ is a pseudo-Anosov homeomorphism of a punctured surface $S$:
\[ M_\phi = S \times [0,1]/((\phi(x),0) \sim (x,1)). \]
We refer to the surface $S \subset M$ as a fiber, since it is a fiber in a fibration of $M$ over the circle.
A deep theorem of Thurston states that the mapping torus of any pseudo-Anosov homeomorphism, hence $M$, admits a finite-volume hyperbolic structure \cite{Ot}.  In this section, we will only use the observation that $M$ is atoroidal.

Fibers of $M$ represent elements of $H^1(M;\R)$.  More precisely, each fiber in a fibration of $M$ over $S^1$ determines and is determined up to isotopy by a homology class which is Poincar\'e dual to an integral element of $H^1(M;\R)$.  Furthermore, primitive integral elements of $H^1(M;\R)$ correspond to connected fibers.  In this way, we identify the set of isotopy classes of fibers in $M$ with a subset of the integral elements of $H^1(M;\R)$.  

Let $M$ be any finite-volume hyperbolic 3-manifold. Thurston defined a norm
\[ \| \cdot \| : H^1(M;\R) \to \R, \]
now called the \emph{Thurston norm}, and proved that the set of all fibers of the mapping torus $M_\phi$ has a convenient description in terms of $|| \cdot ||$.  We summarize the properties of the Thurston norm in the following theorem \cite{ThNorm}.  

\begin{theorem} \label{T:thurston}
Suppose $M$ is a finite volume hyperbolic 3-manifold.
\begin{itemize}
\item The unit ball in $H^1(M;\R)$ with respect to the Thurston norm is a compact polyhedron $B$.
\item There is a set of open top-dimensional faces $F_1,\dots,F_n$ of $B$ so that the fibers of $M$ exactly correspond to the integral elements of the union of the open cones $\R_+ \cdot F_i$.
\item The restriction of $\| \cdot \|$ to any cone $\R_+ \cdot F_i$ is equal to the restriction of a homomorphism $\psi_i:H^1(M;\R) \to \R$ with the property that $\psi_i(H^1(M;\Z)) \subseteq 2\Z$.
\item If $S$ is a fiber, then $\|S\| = -\chi(S)$.
\end{itemize}
\end{theorem}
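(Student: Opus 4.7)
The plan is to follow Thurston's original argument from \cite{ThNorm}, using the Poincar\'e--Lefschetz isomorphism $H^1(M;\R) \cong H_2(M,\partial M;\R)$ to work with (relative) $2$-cycles realized by properly embedded oriented surfaces. For an integral class $\alpha$, I would define
\[ \|\alpha\| = \min \{ \chi_-(S) : [S] \text{ is dual to } \alpha \}, \]
where $\chi_-(S) = \sum_i \max\{0,-\chi(S_i)\}$ ignores sphere and disk components. Subadditivity follows from the standard double-curve-sum: given representatives $S_\alpha$ and $S_\beta$ meeting transversally, one resolves the intersection curves compatibly with orientations to produce a surface dual to $\alpha + \beta$ whose Euler characteristic equals $\chi(S_\alpha) + \chi(S_\beta)$. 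Homogeneity on $H^1(M;\Z)$ is proved similarly using parallel copies, and then $\|\cdot\|$ extends by continuity to a seminorm on $H^1(M;\R)$. Atoroidality of $M$ (together with irreducibility, which follows from the finite-volume hyperbolic structure) rules out essential spheres, disks, annuli, and tori, so any surface contributing $\chi_- = 0$ is inessential and $\|\cdot\|$ is in fact a norm.

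Polyhedrality of the unit ball $B$ follows from a standard convex-geometric observation: a symmetric convex norm that takes integer values on a lattice and is positive on nonzero lattice points has a rational, finite-sided unit ball. This yields the first bullet.

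The heart of the argument is the fibered part. The key is that a fibration $M \to S^1$ is equivalent to a closed nowhere-vanishing $1$-form $\omega_0$, and in the $C^0$ topology on closed $1$-forms the non-vanishing ones constitute an open set. Any closed $1$-form $\omega$ sufficiently close to $\omega_0$ is transverse to the flow generated by the vector field dual to $\omega_0$, and integrating $\omega$ along the flow lines of this flow recovers a fibration $M \to S^1$ whose fiber is dual to $[\omega]$. Consequently the set of fibered classes is open in $H^1(M;\R)$, and since it is also cone-invariant, it decomposes as a union of open cones $\R_+ \cdot F_i$ over a family of open top-dimensional subsets of $\partial B$. To show that these subsets are actually top-dimensional faces and that $\|\cdot\|$ is linear on each cone, I would prove that fibers are norm-minimizing: if $F$ is a fiber, then any properly embedded oriented surface dual to $[F]$ can be reduced, using incompressibility of $F$ in $M_\phi$, to a union of parallel copies of $F$ together with inessential pieces, so $\chi_-(S) \geq -\chi(F)$. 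Combined with additivity of Euler characteristic under disjoint unions of parallel fibers, this forces $\|\cdot\|$ to agree with a linear functional $\psi_i$ on each cone $\R_+ \cdot F_i$, giving the second, third, and fourth bullets simultaneously. The evenness condition $\psi_i(H^1(M;\Z)) \subseteq 2\Z$ then follows because a primitive integral class in such a cone corresponds to a connected closed orientable fiber $S_g$ with $\psi_i(S_g) = -\chi(S_g) = 2g - 2 \in 2\Z$.

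The main obstacle I expect is the openness of the fibered locus together with the construction of fibrations for nearby cohomology classes, which is where the genuinely $3$-dimensional geometric input enters. The remaining steps reduce to routine bookkeeping with Euler characteristic and standard facts from convex analysis.
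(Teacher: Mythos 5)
This statement is not proved in the paper at all: it is quoted as background, with a citation to Thurston's norm paper, so the only fair comparison is with Thurston's original argument, which your sketch is implicitly trying to reconstruct. The routine parts of your plan are fine (definition via $\chi_-$, subadditivity by oriented double-curve sum, nondegeneracy from hyperbolicity, polyhedrality from integrality of the norm on the lattice; note though that homogeneity $\|k\alpha\| = k\|\alpha\|$ is not obtained ``similarly using parallel copies''---parallel copies give only one inequality, and the reverse needs the observation that a map $f:M \to S^1$ with $[f]=k\alpha$ lifts through the $k$-fold cover of $S^1$, so a surface dual to $k\alpha$ splits as $k$ disjoint surfaces each dual to $\alpha$).

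The genuine gap is in the second and third bullets. Knowing that each fiber is norm-minimizing, ``combined with additivity of Euler characteristic under disjoint unions of parallel fibers,'' only pins down the norm along the rays through fibered integral classes; it does not force $\|\cdot\|$ to agree with a single linear functional on an open cone, because you have no lower bound for $\|\alpha+\beta\|$ when $\alpha,\beta$ point in different fibered directions (subadditivity gives the wrong inequality). Thurston's proof supplies exactly this missing linear lower bound: the tangent plane field $\tau$ of the fibration satisfies a generalized Milnor--Wood inequality $|e(\tau)\cdot \eta| \leq \|\eta\|$ for every class $\eta$, with equality at the fiber class by Poincar\'e--Hopf, so $\psi_i = -e(\tau_i)\cdot(\,\cdot\,)$ lies in the dual unit ball and touches the norm on the fibered classes; this is what makes the fibered set a union of cones over top-dimensional faces and is precisely the description of $\psi_i$ the paper records after the theorem. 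Relatedly, your openness argument shows the fibered classes form an open cone, but bullet two asserts more: \emph{every} integral class in the open cone over a fibered face is a fiber, and openness alone does not rule out the fibered set being a proper open subcone of $\R_+\cdot F_i$; closing this requires Thurston's argument for filling out the whole face (or Fried's cross-section theory for the suspension flow, which is the form the present paper uses later). Finally, your evenness argument appeals to connected \emph{closed} fibers of even Euler characteristic, which is only available when $M$ is closed (and even then you must propagate evenness from lattice points inside the cone to all of $H^1(M;\Z)$ by linearity, e.g.\ via $\psi_i(\beta)=\psi_i(n\alpha+\beta)-n\psi_i(\alpha)$); for cusped $M$ the fibers are punctured surfaces and the evenness claim requires genuine care (for the figure-eight knot complement the norm of a generator is $1$), so this bullet cannot be deduced the way you propose in the generality stated.
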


The open faces $F_1,\ldots,F_n$ in Theorem~\ref{T:thurston} are called the \emph{fibered faces} of $M$.  We will often abuse notation by writing $S \in \R_+ \cdot F_i$ to mean that the cohomology class dual to the fiber $S$ lies in the cone over the fibered face $F_i$.

The homomorphisms $\psi_i$ can be described as follows \cite[Theorem 3]{ThNorm}.  Given a fiber $S \in \R_+ \cdot F_i$, the union of all fibers of the fibration defines a codimension $1$ foliation of $M$.  The tangent spaces to the leaves form a $2$-plane bundle $\tau_i$ on $M$, whose homotopy class only depends on $F_i$.  The relative Euler class $e(\tau_i)$, relative to the inward-pointing vector in a neighborhood of the cusp, is dual to an element of $H_1(M;\R)$ which, by pairing with $H^1(M;\R)$, defines a homomorphism to $\R$.  This is precisely $-\psi_i$:
\[ \psi_i(\eta) =  - e(\tau_i) \cdot \eta \]
for all $\eta \in H^1(M;\R)$.

\section{Counting short geodesics I}

We now apply Theorems~\ref{T:universal} and~\ref{T:thurston} in order to prove Theorem~\ref{T:count}, which states that, given $L > 0$ there exists a polynomial $P_L(g)$ so that
\[ \left| \G_g(L) \right| \leq P_L(g) \]
for all $g \geq 1$.

\begin{proof}[Proof of Theorem~\ref{T:count}]

Recall from the introduction that
\[ |\Psi_g(L)/\Mod(S_g)| = |\G_g(L)|. \]
Thus, given $L > 0$, it suffices find a polynomial $P_L(g)$ so that
\[ |\Psi_g(L)/\Mod(S_g)| \leq P_L(g) \]
for all $g$.

According to Theorem~\ref{T:universal}, the set of 3-manifolds $\TT(L)$ is finite.  For any $M \in \TT(L)$, let $b_1(M) = \dim(H^1(M ;\R))$ be the first Betti number of $M$.  

Let $B(r)$ denote the closed ball of radius $r$ around 0 in $H^1(M ;\R)$ with respect to the Thurston norm.  There is a polynomial $p_M(r)$  of degree $b_1(M)$ so that
\[  | H^1(M ;\Z) \cap B(r) | \leq p_M(r). \]

Let $(\phi:S \to S) \in \Psi_g(L)$.  By the Poincar\'e--Hopf index theorem, the number of singular points of the stable foliation for $\phi$ is at most $4g-4$.  Thus, if $S'$ denotes the surface obtained from $S$ by deleting these singular points, we have
\[ |\chi(S')| \leq 6g-6.\]
Let $\phi' : S' \to S'$ denote the restriction of $\phi$ to $S'$.  The map $\phi$ is completely determined up to conjugacy by the conjugacy class of $\phi'$, so it suffices to count the number of conjugacy classes of maps $\phi'$ arising from elements of $\Psi_g(L)$.  By the last statement of Theorem~\ref{T:thurston}, we have
\[ \|S'\| \leq 6g-6. \]
In other words, each $\phi \in \Psi_g(L)$ is, after deleting singular points, the monodromy of some fiber in the ball of radius $6g-6$ with respect to the Thurston norm of some $M \in \TT(L)$.  Thus, setting
\[ P_L(g) = \sum_{M \in \TT(L)} p_M(6g-6),\]
it follows that
\[ |\Psi_g(L)/\Mod(S_g)| \leq P_L(g),\]
as desired.
\end{proof}

\section{Two theorems of Fried about fibered faces} \label{S:more thurston norm}

Let $\phi:S \to S$ be a pseudo-Anosov homeomorphism.  The {\em suspension flow} $\phi_t$ determined by $S$ and $\phi$ is a flow on $M_\phi$ defined using the coordinates
\[ M_\phi = S \times [0,1]/(x,1) \sim (\phi(x),0), \]
and extending the local flow $(x,s) \mapsto (x,s+t)$ on $S \times [0,1]$ to $M_\phi$.  If $b_1(M_\phi) \geq 2$, then $M_\phi$ fibers in infinitely many ways and we obtain infinitely many different suspension flows on $M_\phi$.

We note that $\phi_t$ is transverse to $S$ and the first return map to $S$ is precisely the monodromy $\phi$.  The (unmeasured) stable and unstable foliations $\F_\pm$ for $\phi$ can therefore be suspended.  The result is a pair of $\phi_t$-invariant singular foliations on $M$ which we denote $\F_\pm^M$.  

Fried \cite[Theorem 7 and Lemma]{Fr} proved that monodromy of any other fiber in $\R_+ \cdot F$, the cone containing $S$, has the following description (see also \cite{LO}).

\begin{theorem} \label{T:friedtransverse}
Let $\phi:S \to S$ be a pseudo-Anosov homeomorphism with stable and unstable foliations $\F_\pm$. Let $\phi_t$ denote the suspension flow on $M_\phi$ determined by $S$ and $\phi$, and let $\F_\pm^M$ denote the $\phi_t$-suspensions of $\F_\pm$.  Let $F$ be the fibered face of $M_\phi$ with $S \in \R_+ \cdot F$.  Then for any fiber $\Sigma \in \R_+ \cdot F$, we can modify $\Sigma$ by isotopy so that
\begin{enumerate}
\item the fiber $\Sigma$ is transverse to $\phi_t$ and the first return map $\Sigma \to \Sigma$ is precisely the pseudo-Anosov monodromy associated to $\Sigma$, and
\item the intersections $\F_+^M \cap \Sigma$ and $\F_-^M \cap \Sigma$  are the stable and unstable foliations for $\varphi$, respectively.
\end{enumerate}
\end{theorem}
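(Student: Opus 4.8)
The plan is to derive the theorem from Fried's theory of cross-sections to non-singular flows \cite{Fr}, applied to the suspension flow $\phi_t$ on $M = M_\phi$. The key initial observation is that $S$ is already such a cross-section: it is transverse to $\phi_t$, its first-return map is $\phi$, and its Poincar\'e-dual class $[S]\in H^1(M;\R)$ lies in the fibered cone $\R_+\cdot F$. Thus it suffices to establish three things: (a) every fiber $\Sigma\in\R_+\cdot F$ is isotopic to a cross-section of $\phi_t$; (b) the first-return map of any cross-section is the pseudo-Anosov monodromy determined by that fiber; and (c) the traces $\F_\pm^M\cap\Sigma$ are its stable and unstable foliations.

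For (a) I would invoke Fried's criterion: the set $\mathcal C\subseteq H^1(M;\R)$ of classes dual to cross-sections of $\phi_t$ is an open cone, and $u\in\mathcal C$ if and only if $u$ is positive on every homology direction of $\phi_t$. Identifying $\mathcal C$ with $\R_+\cdot F$ requires two inclusions. The inclusion $\mathcal C\subseteq\R_+\cdot F$ is the easy one: a cross-section is a fiber, so $\mathcal C$ is contained in the disjoint union of the open fibered cones of Theorem~\ref{T:thurston}, and since $\mathcal C$ is connected and contains $[S]\in\R_+\cdot F$, it lies in $\R_+\cdot F$. The reverse inclusion $\R_+\cdot F\subseteq\mathcal C$ is the substantive point and is where the pseudo-Anosov structure enters: one must show that the homology directions of the suspension flow lie in the cone dual to $\R_+\cdot F$. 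Fried obtains this from a Markov partition for $\phi$ --- the homology directions arise as limits of normalized homology classes of long almost-closed orbit segments, which are approximated by periodic orbits of $\phi_t$, and these homology classes can be estimated against the linear functional cutting out $F$. Granting this, $\mathcal C=\R_+\cdot F$, so each $\Sigma\in\R_+\cdot F$ is isotopic to a cross-section.

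Once $\Sigma$ is a cross-section, $\phi_t$ realizes $M$ as the mapping torus of the first-return homeomorphism $\psi\colon\Sigma\to\Sigma$, with $\Sigma$ the fiber and $[\Sigma]$ the prescribed class; since $M$ is atoroidal, $\psi$ is pseudo-Anosov (Thurston), and it is the monodromy associated to $\Sigma$, well-defined up to isotopy --- this is (b), hence conclusion~(1). For~(2): the foliations $\F_\pm^M$ are $\phi_t$-invariant and their leaves contain the flow lines, so away from the finitely many singular orbits of $\phi_t$ they meet $\Sigma$ transversely, and the traces $\F_\pm^M\cap\Sigma$ are singular $1$-dimensional foliations of $\Sigma$ preserved by $\psi$ (being defined by flowing). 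It remains to identify them with the invariant foliations of $\psi$; for this one checks that the transverse measures $\mu_\pm^M$ on $\F_\pm^M$ (suspended from $\mu_\pm$ and scaled exponentially along the flow) restrict to transverse measures on the traces that $\psi$ multiplies by a global constant, so that $\F_\pm^M\cap\Sigma$ are $\psi$-invariant measured foliations and therefore, by uniqueness of the invariant measured foliations of a pseudo-Anosov map, its stable and unstable ones.

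I expect the main obstacle to be the reverse inclusion $\R_+\cdot F\subseteq\mathcal C$ in step~(a): identifying the cone dual to the homology directions of the suspension flow with the entire fibered cone of the Thurston norm. A secondary technical issue, in step~(c), is reconciling the point-dependent first-return time along $\Sigma$ with the exponential scaling of $\mu_\pm^M$ along the flow, so as to see that $\psi$ scales the restricted transverse measures by a constant; this is where the new monodromy's dilatation, and hence its pseudo-Anosov dynamics, ultimately come from.
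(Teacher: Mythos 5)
The paper itself does not prove this theorem; it cites Fried directly (\cite[Theorem 7 and Lemma]{Fr}) and uses it as a black box, so there is no internal proof to compare against. Your outline does use the correct framework --- Fried's theory of cross-sections to non-singular flows and homology directions --- and it correctly isolates the two substantive points: that the open cone $\mathcal C$ of cross-section classes coincides with the fibered cone $\R_+\cdot F$, and that the traces of $\F_\pm^M$ on a cross-section are the invariant foliations of its first return map.

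However, the route you sketch for step~(c) would fail as stated, and the difficulty you flag there is not merely secondary. You propose to suspend $\mu_\pm$ to objects $\mu_\pm^M$ on $\F_\pm^M$ that scale like $\lambda^{\pm t}$ along the flow, restrict them to $\Sigma$, and then argue that the first return map $\psi$ multiplies the restrictions by a constant. Because the first-return time $\tau\colon\Sigma\to\R_+$ is not constant, the restricted object is not even a transverse measure: sliding an arc along a leaf of $\F_+^M\cap\Sigma$ \emph{inside $\Sigma$} changes its height in the flow direction and therefore changes the $\lambda^t$ weight, so the restriction is not holonomy-invariant, let alone scaled by a global constant under $\psi$. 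This is precisely what the paper's remark following the theorem is warning about when it emphasizes that $\F_\pm^M$ are \emph{only topological} foliations of $M_\phi$ and not transversely measured ones. Closing this gap requires genuinely new input: either one solves the relevant cohomological equation to replace the naive restriction by a conformally corrected measure $e^{f}\cdot(\mu_+^M|_\Sigma)$ that $\psi$ does multiply by a constant, or --- closer to Fried's actual argument, and to what underlies McMullen's Theorem~\ref{T:mcmullen} quoted later in the paper --- one passes to a Markov partition for the suspension flow, restricts it to a Markov partition for $\psi$ on $\Sigma$, and extracts the invariant transverse measures and the dilatation as Perron--Frobenius data of the induced transition matrix. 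Without one of these, parts (b) and (c) only establish that $\psi$ is \emph{isotopic} to the pseudo-Anosov monodromy with the stated foliations, which is strictly weaker than the theorem's assertion that the first return map \emph{is} that pseudo-Anosov map.
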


In Theorem~\ref{T:friedtransverse}, the foliations $\F_+^M$ and $\F_-^M$ are only topological foliations of $M_\phi$, and not transversely measured foliations.  In particular, the intersections $\F_+^M \cap \Sigma$ and $\F_-^M \cap \Sigma$ are only the topological stable and unstable foliations for a fiber $\Sigma$, and not the transversely measured foliations.   We will return to this issue in Section~\ref{S:continuity}.

\bigskip

Let $M$ be a finite-volume hyperbolic 3-manifold, and let $F$ be a fibered face of $M$.  Fried proved that there is a continuous function
\[ \Lambda_F : \R_+ \cdot F  \to (1,\infty), \]
with the property that for each fiber $S \in \R_+ \cdot F$, $\Lambda_F(S) = \lambda(\phi)$, where $\phi : S \to S$ is the monodromy.  We summarize the properties of $\Lambda_F$ in the following; see \cite[Theorem F]{Fr0} and \cite[Section 5]{Mc}.

\begin{theorem} \label{T:frieddilatation}
Let $M$ be a finite-volume hyperbolic 3-manifold.  For each fibered face $F$ of $M$, there exists a continuous function
\[ \Lambda_F:\R_+ \cdot F \to \R \]
with the following properties:
\begin{itemize}
\item For every $\eta \in \R_+ \cdot F$ and every $t > 0$, we have
\[ \Lambda_F(t \eta)  = \Lambda_F(\eta)^{1/t}.\]
\item For any fiber $S \in \R_+ \cdot F$ with monodromy $\phi$, we have:
\[ \Lambda_F(S) = \lambda(\phi).\]
\item For any sequence $\{ \eta_i \} \subset \R_+ \cdot F$ with a nonzero limit outside the open cone $\R_+ \cdot F$, we have 
\[ \Lambda_F(\eta_i) \to \infty.\]
\end{itemize}
\end{theorem}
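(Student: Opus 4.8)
The plan is to construct $\Lambda_F$ directly from the Thurston-norm structure together with the dynamics of the suspension flow, then verify the three listed properties in turn. For the homogeneity property, recall from Theorem~\ref{T:thurston} that $\|\cdot\|$ restricts to a linear functional $\psi$ on the cone $\R_+ \cdot F$, and for an integral fiber $S$ one has $\|S\| = -\chi(S)$; for a fiber $S$ with monodromy $\phi$, the dilatation $\lambda(\phi)$ and the Euler characteristic are linked through the structure of the invariant train track, and in fact the right normalization is to define $\log \Lambda_F$ as a function that scales \emph{inversely} to the norm. Concretely, I would first show that along the ray $\{t S : t > 0\}$ through an integral fiber, the would-be value must satisfy $\log \Lambda_F(tS) = \tfrac{1}{t}\log\lambda(\phi)$: the fiber ``$tS$'' corresponds combinatorially to passing to a cyclic cover (or its reciprocal, a quotient) in the flow direction, which multiplies the first-return time and hence takes the $t$-th root of the dilatation. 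This pins down $\Lambda_F$ on the dense set of rational rays, and the homogeneity relation $\Lambda_F(t\eta) = \Lambda_F(\eta)^{1/t}$ is then forced on all of $\R_+\cdot F$ by continuity once continuity is established.

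The core of the argument is continuity, and this is where I expect the real work to lie. The natural route is McMullen's Teichm\"uller polynomial, or alternatively Fried's original approach via the flow: the stable foliation $\F_+^M$ of the suspension flow $\phi_t$ (Section~\ref{S:more thurston norm}) carries a flow-invariant transverse measure data that lets one compute, for each fiber $\Sigma \in \R_+\cdot F$, the dilatation of its monodromy as the largest root of a single polynomial $\Theta_F$ whose coefficients are monomials in the deck group $H_1(M;\Z)/\text{torsion}$ evaluated at $\eta$. More precisely, by Theorem~\ref{T:friedtransverse} the monodromy of $\Sigma$ has stable foliation $\F_+^M \cap \Sigma$, which is the invariant track carried by the ``universal'' object $\F_+^M$; specializing the group-ring computation to the cohomology class $\eta$ dual to $\Sigma$ yields a Perron--Frobenius-type matrix whose leading eigenvalue is $\lambda$. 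Since the entries of this matrix vary continuously (indeed real-analytically) with $\eta \in \R_+\cdot F$, and since the Perron--Frobenius eigenvalue of a nonnegative primitive matrix depends continuously on its entries, $\Lambda_F$ extends to a continuous function on the whole open cone. The key technical point to nail down is that the same transverse track works uniformly over the face—this is exactly the content of Fried's theorem that the foliation $\F_+^M$ depends only on $F$, and of Theorem~\ref{T:friedtransverse}(2)—so that one genuinely gets a \emph{single} formula valid on $\R_+\cdot F$ rather than a formula that changes from ray to ray.

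For the third property, that $\Lambda_F(\eta_i) \to \infty$ when $\eta_i$ converges to a nonzero boundary point $\eta_\infty \notin \R_+\cdot F$, the strategy is to argue by contradiction: if $\Lambda_F(\eta_i)$ stayed bounded, then the corresponding monodromies would have uniformly bounded dilatation, hence (by the finiteness results already invoked, e.g.\ the Arnoux--Yoccoz--Ivanov finiteness of pseudo-Anosov maps of bounded dilatation on a fixed surface, combined with bounded Thurston norm along a bounded sequence of integral points) the fibers would have bounded complexity $\|\Sigma_i\| = -\chi(\Sigma_i)$; but $\|\cdot\|$ is linear and positive on $\R_+\cdot F$ and the $\eta_i$ approach a point on $\partial(\R_+\cdot F)$, which is compatible with bounded norm—so the obstruction must instead come from the flow dynamics: as $\eta_i \to \eta_\infty$ the transverse fiber $\Sigma_i$ becomes ``more and more tangent'' to the flow, forcing the first-return map to undergo arbitrarily much stretching, i.e.\ $\lambda \to \infty$. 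Making this precise is cleanest via the polynomial $\Theta_F$ again: as $\eta$ exits the cone, some specialized coefficient degenerates and the largest root of the specialized polynomial blows up. The main obstacle throughout is organizing the flow-dynamical bookkeeping—establishing that a single train track (equivalently, a single Teichm\"uller polynomial) governs the entire face and that its specializations behave as claimed near the boundary; once that uniform structure is in hand, all three bullet points follow by elementary Perron--Frobenius continuity and degeneration arguments.
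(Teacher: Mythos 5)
The paper does not prove this theorem; it is stated as a summary of Fried's work and cited to \cite[Theorem F]{Fr0} and \cite[Section 5]{Mc}, so there is no internal proof to compare against. Your sketch does follow McMullen's route: build a flow-invariant train track carrying $\F_+^M$, package the transition data as a matrix over $\Z[H_1(M;\Z)/\mathrm{torsion}]$ (the Teichm\"uller polynomial $\Theta_F$), and recover $\Lambda_F(\eta)$ as the largest root of the specialization at $\eta$; continuity then comes from continuity of the Perron--Frobenius eigenvalue, and you correctly point to Theorem~\ref{T:friedtransverse} for the uniformity of the track over the face. Your homogeneity check via the disjoint-copies interpretation of $k[S]$ is also sound.

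The genuine gap is in the third bullet. You rightly abandon the first attempt (bounded dilatation forcing bounded complexity), since bounded Thurston norm near the boundary contradicts nothing. But what you replace it with, ``as $\eta$ exits the cone, some specialized coefficient degenerates and the largest root blows up,'' is an assertion rather than an argument, and the mechanism is wrong as stated: the coefficients of $\Theta_F$ are fixed integers, so nothing literally degenerates as $\eta$ varies. The specialization $\Theta_F(s^{\eta}) = \sum_g a_g s^{\eta(g)}$ changes only in its exponents, and why the largest root must blow up as $\eta$ hits the face boundary is exactly the hard part of the theorem. McMullen establishes it through structural properties of $\Theta_F$: $1/\log\Lambda_F$ extends to a homogeneous degree-one, concave, real-analytic function on $\R_+\cdot F$ that is positive on the interior and extends continuously by zero to the boundary, proved by comparison with the Alexander polynomial and Perron--Frobenius theory over the group ring. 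Fried's original argument instead interprets $\log\Lambda_F(\eta)$ as a weighted growth rate of closed orbits of the suspension flow and uses the existence of orbits with homology directions dual to the boundary faces of the cone. Either route is a genuine proof; your proposal leaves the third property as the least-addressed and most difficult piece.
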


\bigskip

\p{McMullen's proof of Penner's theorem} As observed by McMullen \cite{Mc}, Theorem~\ref{T:frieddilatation} can be used to prove Penner's theorem that there is an $L_0$ so that $\G_g(L_0)$ (equivalently, $\Psi_g(L_0)$) is nonempty for all $g \geq 1$.  To see this, let $\phi : S_2 \to S_2$ be a pseudo-Anosov homeomorphism whose action on $H_1(S_2;\R)$ fixes a nontrivial element and consider the mapping torus $M_\phi$.  Because $\phi$ has a nonzero fixed vector, we have $b_1(M_\phi) \geq 2$.  Let $F$ be the fibered face of $M_\phi$ with $S_2 \in \R_+ \cdot F$.  

Recall from Theorem~\ref{T:thurston} that the restriction of the Thurston norm to the cone over $F$ is given by the restriction of a homomorphism $\psi : H^1(M_\phi;\R) \to \R$ with $\psi(H^1(M_\phi;\Z)) \subseteq 2\Z$.  Since every integral class has even norm, and $\psi(S_2) = ||S_2|| = 2$, we have $\psi(H^1(M_\phi;\Z)) = 2\Z$. Let $\Sigma \in H^1(M_\phi;\Z)$ be an element of the kernel of $\psi$ which, together with $S_2$, is part of a basis for $H^1(M_\phi;\Z)$.  For large $g$, the primitive cohomology class
\[ \Sigma_g = (g-1) \cdot S_2 + \Sigma \]
lies in the cone $\R_+ \cdot F$.  Also, applying the last statement of Theorem~\ref{T:thurston}, we have
\[ \|\Sigma_g\| = \psi(\Sigma_g) = \psi((g-1) \cdot S_2 + \Sigma) = (g-1)\psi(S_2) = (g-1)\|S_2\| = 2g-2.\]
As $M_\phi$ is closed, each fiber $\Sigma_g$ is a closed surface.  Moreover, since each $\Sigma_g$ represents a primitive cohomology class, it is a connected surface.  Since $\|\Sigma_g\|=2g-2$, it follows from Theorem~\ref{T:thurston} that $\Sigma_g$ has genus $g$.

According to Theorems~\ref{T:thurston} and~\ref{T:frieddilatation}, the function
\[ \eta \mapsto \|\eta\| \log(\Lambda_F(\eta)) \]
is continuous on $\R_+ \cdot F$ and is constant on rays from the origin.  Furthermore, for every $\Sigma_g$, we have
\[ \|\Sigma_g\| \log(\Lambda_F(S_g)) = (2g-2)\log(\lambda(\phi_g)), \]
where $\phi_g:\Sigma_g \to \Sigma_g$ is the monodromy.

The rays through the $\Sigma_g$ limit to the ray through $S_2$.  Thus, by the previous paragraph,
\[ (2g-2)\log(\lambda(\phi_g)) \to 2 \log(\lambda(\phi))  < \infty. \]
It follows that $(2g-2)\log(\lambda(\phi_g))$ is bounded from above by some constant $L_0$, independent of $g$, and thus $\log(\lambda(\phi_g)) < L_0/g$ for all sufficiently large $g$.  By increasing $L_0$ if necessary, we can accommodate the finitely many genera not covered by this construction, and Penner's theorem follows.

\bigskip

\noindent
{\bf Remark.}
By investigating the monodromies of specific finite-volume fibered hyperbolic $3$-manifolds, Hironaka \cite{Hir}, Aaber--Dunfield \cite{AD}, and Kin--Takasawa \cite{KT} showed that $L_0 = \log((3+\sqrt{5})/2)$ suffices for all sufficiently large $g$, as mentioned in the introduction.

\section{Counting short geodesics II}

In this section we prove Theorem~\ref{T:count2}, which states that, given $d > 0$, there exists a polynomial $Q_d(g)$ of degree $d$, with positive leading coefficient, and $L>0$ so that
\[ \left| \G_g(L) \right| \geq Q_d(g) \]
for all $g \geq 1$.

First, we require a lemma.

\begin{lemma} \label{L:big bundle small norm}
For any $g \geq 2$, there exists a pseudo-Anosov homeomorphism $\phi:S_g \to S_g$ that acts trivially on $H_1(S_g;\R)$ and has the following property: if $\R_+ \cdot F \subset H^1(M;\R)$ is the cone on the fibered face containing $S_g$, and $\psi:H^1(M_\phi;\R) \to \R$ is the homomorphism that restricts to the Thurston norm on $\R_+ \cdot F$, then $\psi(H^1(M;\Z)) = 2 \Z$.
\end{lemma}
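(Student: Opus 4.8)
The plan is to construct $\phi$ explicitly as a mapping class acting trivially on homology (for instance, a suitable product of Dehn twists along separating curves, or more robustly, a pseudo-Anosov lying in the Torelli group $\mathcal{I}(S_g)$, which is known to contain pseudo-Anosov elements for every $g \geq 2$), while simultaneously arranging that $b_1(M_\phi) \geq 2$ so that the cone $\R_+ \cdot F$ is genuinely higher-dimensional and the homomorphism $\psi$ has the prescribed image. The key point to exploit is the last bullet of Theorem~\ref{T:thurston}: $\psi(H^1(M;\Z)) \subseteq 2\Z$ always, and $\psi(S_g) = \|S_g\| = -\chi(S_g) = 2g-2$. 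Since $g \geq 2$, the class $S_g$ already has norm at least $2$, so if $g = 2$ we would be done immediately from $\psi(S_2) = 2$. For $g > 2$ the value $2g-2$ is a larger even number, so we must produce \emph{some} integral class in the cone (or its closure's span) on which $\psi$ takes the value $2$, i.e., we need $\psi$ to be surjective onto $2\Z$ and not merely land in, say, $4\Z$ or $6\Z$.

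The mechanism for forcing surjectivity onto $2\Z$ is exactly the argument McMullen uses in the proof of Penner's theorem recalled in Section~\ref{S:more thurston norm}: if $\phi$ fixes a nonzero class in $H_1(S_g;\R)$ — which is automatic here since $\phi$ acts trivially on all of $H_1$ — then $b_1(M_\phi) \geq 2$, and we may choose an integral class $\Sigma \in \ker(\psi)$ that together with (the Poincar\'e dual of) $S_g$ forms part of a basis for $H^1(M_\phi;\Z)$; here we need $\psi$ restricted to the $\Z$-span of these basis elements to already realize $2\Z$, which follows once we know $\psi(H^1(M_\phi;\Z)) = 2\Z$. So the argument is slightly circular unless we extract surjectivity more carefully. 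The clean way: since $\psi(H^1(M_\phi;\Z)) \subseteq 2\Z$ is a subgroup, it equals $2k\Z$ for some integer $k \geq 1$, and the greatest common divisor of all values $\psi$ takes on integral classes is $2k$. Now use that $H_1(M_\phi;\Z)$ is generated by the fiber class $[S_g]$ together with classes coming from $H_1(S_g;\Z)$ (via the Mayer--Vietoris / Wang exact sequence for the fibration, using that $\phi_*$ is the identity on $H_1(S_g;\Z)$ so the Wang sequence splits and $H_1(M_\phi;\Z) \cong H_1(S_g;\Z) \oplus \Z$). Dually, $H^1(M_\phi;\Z) \cong H^1(S_g;\Z) \oplus \Z$, and pairing with the Euler class description $\psi_i(\eta) = -e(\tau_i) \cdot \eta$ from Section~\ref{S:thurston norm}: the Euler class $e(\tau)$ of the tangent plane field to the fibration, when evaluated against the fiber-transverse generator, gives $-\chi$ of the fiber surface sliced appropriately, but more to the point $e(\tau)$ paired against the \emph{section} class coming from a fixed point of $\phi$ (a pseudo-Anosov has periodic points, and after passing to a power or choosing $\phi$ to have a fixed regular point one gets a near-section) can be arranged to have odd pairing, forcing $k = 1$.

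Here is the cleanest route I would actually write. Choose $\phi \in \mathcal{I}(S_g)$ pseudo-Anosov (such elements exist by Thurston's construction applied to a pair of filling separating curves, or by Penner's construction, all of which can be arranged inside Torelli). Then $H_1(M_\phi;\Z) = H_1(S_g;\Z) \oplus \Z\langle c \rangle$ where $c$ is the class of a loop transverse to the fiber. Dually $\psi \in H^1(M_\phi;\R)$ and $\psi(S_g) = 2g-2$ where $S_g$ is viewed via Poincar\'e duality as the cohomology class pairing to $1$ with $c$ and to $0$ with $H_1(S_g)$. The remaining values of $\psi$ on the $H^1(S_g;\Z)$ summand are governed by $e(\tau) \cdot H_1(S_g;\Z)$; I claim these can be made to hit an odd number. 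Concretely, take a simple closed curve $\alpha$ on the fiber $S_g$ with $\phi(\alpha)$ isotopic to $\alpha$ only up to the action being trivial on homology — rather, take the class of $\alpha \times \{\text{pt}\}$-type torus or annulus built from the mapping torus structure — and compute $e(\tau)$ against it using that the fiber foliation restricted to such a subsurface has controlled index. The main obstacle, and the part I expect to require genuine care, is precisely this parity computation: showing that the subgroup $\psi(H^1(M_\phi;\Z))$ is not contained in $4\Z$. Everything else (existence of Torelli pseudo-Anosovs, the Wang sequence splitting, $b_1 \geq 2$) is standard. One safe fallback that sidesteps the parity issue entirely: note $\psi(H^1(M_\phi;\Z)) = 2k\Z$ with $k \mid (g-1)$ since $\psi(S_g) = 2(g-1)$; then simply \emph{replace} the conclusion's target genus — but since the lemma insists on $\psi(H^1(M;\Z)) = 2\Z$ exactly, I would instead observe that by Theorem~\ref{T:thurston}'s third bullet, $\psi$ is the restriction of a homomorphism defined on all of $H^1(M;\R)$ and its integrality is exactly $\psi(H^1(M;\Z)) \subseteq 2\Z$ with equality iff there is a primitive integral class of norm $2$ in some cone $\R_+ \cdot F_j$ — equivalently iff $M$ fibers over $S^1$ with fiber a once-punctured torus or a four-times-punctured sphere in the relevant face — and I would engineer $\phi$ from the outset to be the monodromy of a fibration of such an $M$ (e.g., start with a once-punctured-torus bundle, verify its Thurston norm ball, and locate a genus-$g$ fiber in the same fibered face), so that $\psi(H^1(M;\Z)) = 2\Z$ holds by construction. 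I expect the write-up to take the "engineer from the start" approach to avoid the delicate parity argument.
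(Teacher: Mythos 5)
You correctly isolate both ingredients---a pseudo-Anosov $\phi$ in the Torelli group, and the surjectivity of $\psi$ onto $2\Z$ rather than onto some proper subgroup $2k\Z$---and you correctly observe that the case $g=2$ is immediate from $\psi(S_2)=-\chi(S_2)=2$.  But for $g>2$ you never actually produce an integral class on which $\psi$ evaluates to $\pm 2$; you flag this as ``the main obstacle'' and then defer it.  That deferred step is precisely the content of the lemma, so the proposal has a genuine gap.  The paper closes it with a concrete geometric construction: it chooses $\phi$ (a product of Dehn twists along curves that are separating or homologically cancelling) so that there is a nonseparating simple closed curve $\gamma$ with $\gamma$ and $\phi(\gamma)$ cobounding an embedded genus-$1$ subsurface $\Sigma_0 \subset S_g$.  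Pushing the collar of the boundary component $\gamma$ up the suspension flow (via $f(x)=\phi_{\eta(x)}(x)$ for a bump function $\eta$ with $\eta|_\gamma\equiv 1$) identifies $f(\gamma)$ with $\phi(\gamma)$ and yields a \emph{closed genus-$2$ surface} $\Sigma \subset M_\phi$ transverse to $\phi_t$.  Transversality is the key: it makes $\tau|_\Sigma$ homotopic to $T\Sigma$, so $\psi([\Sigma])=-e(\tau)\cdot[\Sigma]=-\chi(\Sigma)=2$, and combined with $\psi(H^1(M;\Z))\subseteq 2\Z$ this gives equality.  Your ``Euler class pairing'' intuition is pointing in exactly this direction, but without a transverse surface in hand the pairing computation has nothing to bite on.

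Your proposed fallback---start with a once-punctured-torus bundle, verify its norm ball, and locate a genus-$g$ fiber in the same face---does not match the structure of the lemma and would not fill the gap.  First, the lemma concerns the \emph{closed} mapping torus $M_\phi$ of a \emph{closed}-surface pseudo-Anosov $\phi:S_g\to S_g$; the fibers of a once-punctured-torus bundle are punctured, and filling cusps changes both $H^1$ and the Thurston norm, so you cannot simply transport the norm computation.  Second, a once-punctured torus has $|\chi|=1$, not $2$, so it does not supply the even-norm-$2$ class you want (a four-punctured sphere has norm $2$ but again is not closed, and neither choice gives you any control over whether the resulting genus-$g$ monodromy lies in Torelli).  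Third, even granting a genus-$g$ fiber in the same face, nothing in that setup forces $\phi$ to act trivially on $H_1(S_g;\R)$, which is a hypothesis of the lemma and is also what the paper uses (in the proof of Theorem~\ref{T:count2}) to get $b_1(M_\phi)=2g+1$.  The explicit Dehn-twist construction in the paper is engineered to satisfy all three constraints at once, and the transverse genus-$2$ surface is the device that makes the $2\Z$ computation a one-line Euler-class calculation rather than a parity argument.
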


\begin{proof}

Assume first that $g \geq 6$, and let
\[ \alpha_0,\ldots,\alpha_{m},\beta_0,\ldots,\beta_{n},\gamma \]
be the simple closed curves in $S_g$ shown here for the case $g=8$:

\vspace{.25in}

\labellist
\small\hair 2pt
\pinlabel {$\gamma$} [ ] at 120 58
\pinlabel {$\beta_0$} [ ] at 95 100
\pinlabel {$\alpha_0$} [ ] at 95 -7
\pinlabel {$\alpha_1$} [ ] at 22 70
\pinlabel {$\beta_1$} [ ] at 165 15
\pinlabel {$\alpha_2$} [ ] at 200 17
\pinlabel {$\beta_2$} [ ] at 255 16
\pinlabel {$\alpha_3$} [ ] at 334 22
\pinlabel {$\beta_3$} [ ] at 349 75
\pinlabel {$\alpha_4$} [ ] at 445 22
\pinlabel {$\beta_4$} [ ] at 291 80
\endlabellist
\noindent \includegraphics[scale=.77]{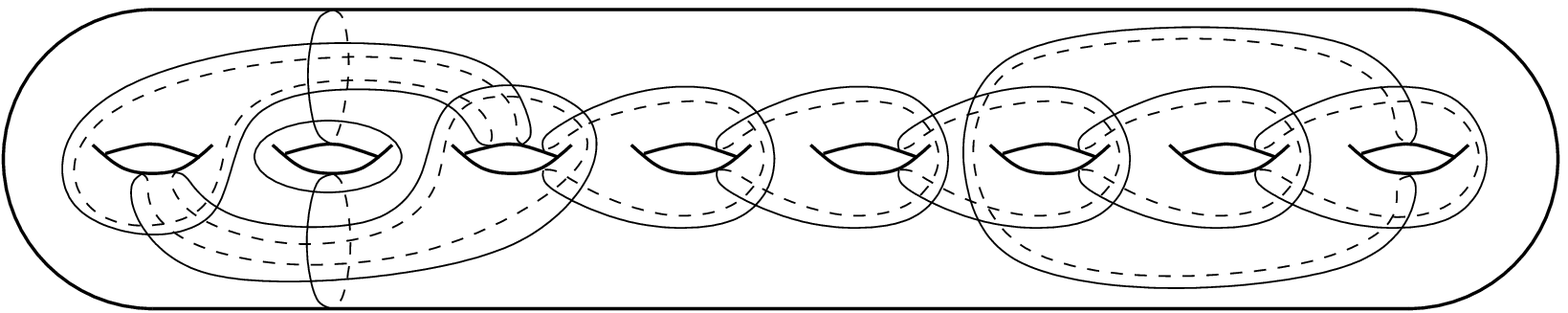}

\vspace{.25in}

\noindent (when $g$ is odd, $m=(g+1)/2$ and $n=(g-1)/2$, and when $g$ is even, $m=n=g/2$).  

Consider the product of Dehn twists:
\[ \phi = (T_{\beta_0}^{-1}T_{\alpha_0})(T_{\alpha_1} T_{\alpha_2}\cdots T_{\alpha_m})(T_{\beta_1}^{-1}T_{\beta_2}^{-1} \cdots T_{\beta_n}^{-1}). \]
By Thurston's theorem \cite[Theorem 7]{ThMCG}, the conjugate $T_{\beta_0}\phi T_{\beta_0}^{-1}$ is isotopic to a pseudo-Anosov homeomorphism (see also \cite{Pe2}), hence $\phi$ is.

The action of $\phi$ on $H_1(S_g;\R)$ is trivial.  Indeed, since each $\alpha_i$ and $\beta_i$ is separating in $S_g$ for $i > 0$, each of these $T_{\alpha_i}$ and $T_{\beta_i}$ acts trivially on $H_1(S_g;\R)$.  Also, since $\alpha_0$ and $\beta_0$ can be oriented so that they represent the same element of $H_1(S_g;\Z)$, the twists $T_{\alpha_0}$ and $T_{\beta_0}$ have the same action on $H_1(S_g;\R)$.

We will require one further property of $\phi$, which is that $\gamma$ and $\phi(\gamma)$ cobound an embedded genus $1$ surface $\Sigma_0$ in $S_g$.  To check this, note that $\phi(\gamma) = T_{\beta_0}^{-1}T_{\alpha_0}(\gamma)$.

We can construct similar configurations of curves, and hence a similar $\phi$, when $g$ is  3, 4, or 5.  Indeed, for $g=3$, we can simply use the curves $\alpha_0$, $\beta_0$, $\alpha_1$, and $\beta_1$.  The cases of $g=4$ and $g=5$ require nontrivial modifications.  However, since these cases are not logically needed for the proof of Theorem~\ref{T:count2}, we leave the constructions to the reader.  For the last case, $g=2$, any pseudo-Anosov $\phi$ acting trivially on $H_1(S_2;\Z)$ suffices.

The surface $\Sigma_0 \subset S_g \subset M_\phi = M$ is transverse to the suspension flow $\phi_t$ on $M$ since $S_g$ is.  We can push $\Sigma_0$ along flow lines to construct a closed embedded surface of genus $2$ transverse to $\phi_t$ as follows (compare [CLR], for example).  Let $N(\gamma) \subset \Sigma_0$ be a collar neighborhood of the boundary component $\gamma$.  Let $\eta:\Sigma_0 \to [0,1]$ be a smooth function supported on $N(\gamma)$, where $\eta^{-1}(1) = \gamma$, and where the derivative of $\eta$ vanishes on $\gamma$.  Define $f:\Sigma_0 \to M_\phi$ by $f(x)= \phi_{\eta(x)}(x)$.  The map $f$ is an embedding on the interior of $\Sigma_0$ and has $f(\gamma) = \phi(\gamma)$.  The image is a closed genus 2 surface $\Sigma$ which is transverse to $\phi_t$.

Let $\tau$ denote the $2$-plane bundle on $M$ defined by the tangent space to the fibers of the fibration $M \to S^1$, and let $e(\tau)$ denote its Euler class.  The restriction of $\tau$ to $\Sigma$ is homotopic to the tangent plane bundle, and hence $\psi([\Sigma]) = - e(\tau)\cdot [\Sigma]  = -\chi(\Sigma) = 2$.  Since $\psi$ is a homomorphism, its image contains $2\Z$ as desired.  
\end{proof}

\begin{proof}[Proof of Theorem~\ref{T:count2}]

Without loss of generality, suppose $d \geq 4$ is even.  Let $S$ be a closed surface of genus $d/2$, let $\phi : S \to S$ be a pseudo-Anosov homeomorphism as in Lemma~\ref{L:big bundle small norm}, and set $M = M_\phi$.   Since $\phi$ acts trivially on $H_1(S;\R)$, we have $b_1(M) = d + 1$.   Let $F$ be the fibered face of $M$ with $S \in \R_+ \cdot F$ and let $\psi:H^1(M;\R) \to \R$ be the homomorphism agreeing with the Thurston norm on $\R_+ \cdot F$.  By Lemma~\ref{L:big bundle small norm}, we have $\psi(H^1(M;\Z)) = 2\Z$.

Choosing a basis for $H^1(M;\Z)$ induces an isomorphism $H^1(M;\Z) \cong \Z^{d+1}$ which extends to an isomorphism $H^1(M;\R) \cong \R^{d+1}$.  We choose a basis for $H^1(M;\Z)$ so that, with respect to this isomorphism, $\psi$ is given by
\[ \psi(x_0,\ldots,x_d) = 2 x_0 \]
It follows that the face $F$ is contained in the hyperplane $x_0 = 1/2$.

Let $K$ be a closed $d$-cube in $F$.  If $K$ is centered at $(1/2,t_1,\ldots,t_d)$ and has side length $2r$, then
\[ K = \{ (1/2,x_1,\ldots,x_d) \in \R^{d+1} \mid \max_{j=1,\ldots,d} {|x_j - t_j| \leq r} \}.\]
Since $K$ is compact, the function $\Lambda_F$ from Theorem~\ref{T:frieddilatation} attains a maximum $C$ on $K$.  Since $\| \cdot \|\log(\Lambda_F(\cdot))$ is constant on rays (Theorems~\ref{T:thurston} and~\ref{T:frieddilatation}), the function $\| \cdot \| \log(\Lambda_F(\cdot))$ restricted to $\R_+ \cdot K$ also has some maximum $L$.  Thus, the monodromy of every primitive integral point in $\R_+ \cdot K$ is an element of $\Psi(L)$, and so corresponds to an element of $\cup_g \G_g(L)$.  

Now, given $g \geq 2$ the set
\[ \Omega_g = \{ v \in \Z^{d+1} \cap (2g-2) \cdot K \mid v \mbox{ primitive} \} \]
determines a set of fibers of $M$ with monodromies defining geodesics in $\G_g(L)$.   
Two different fibers may define the same geodesic in $\G_g(L)$, but only if the monodromies are conjugate.  In this case there is a self-homeomorphism of $M$ that sends one fiber to the other.  Such a homeomorphism induces an nontrivial isometry of the Thurston norm on $H^1(M;\R)$.  Since the unit ball is a polyhedron, this symmetry group is finite of some order $N$ (compare \cite[Corollary of Theorem 1]{ThNorm}), and so the map from $\Omega_g$ to $\G_g(L)$ is at most $N$ to $1$.

It remains to show that $|\Omega_g|$ is bounded from below by a degree $d$ polynomial with positive leading coefficient.  This is a standard counting argument (cf. \cite[Theorem 3.9]{Ap}), and so we content ourselves to explain the idea.  Before we begin, we notice that it is enough to check this for large $g$; the statement for arbitrary $g$ is then obtained by subtracting a constant from the polynomial.

We can explicitly describe the $d$-cube $(2g-2) \cdot K$ as:
\[ (2g-2) \cdot K = \left \{ (g-1,n_1,\ldots,n_d) \mid \max_{j=1,\ldots,d} |n_j - (2g-2)t_j| \leq (2g-2)r \right \}.\]
For $g$ large enough, the number of integral points in this cube is approximately $((4g-4)r)^d$.

If an integral vector $(g-1,n_1,\ldots,n_d) \in (2g-2) \cdot K$ is imprimitive, it must be divisible by one of the prime factors $p$ of $g-1$, and hence must be the $p$th multiple of an integral point in the $d$-cube $(2g-2)/p \cdot K$, which, for large $g$, contains approximately 
\[ \left (\frac{(4g-4)r}{p}\right)^d. \]
integral points.  Now, if $p_1,\ldots,p_m$ are the prime divisors of $g-1$, it follows that
\begin{align*}
|\Omega_g| &\sim ((4g-4)r)^d - \sum_{i=i}^m  \left( \frac{(4g-4)r}{p_i} \right)^d \\
&= ((4g-4)r)^d \left( 1 - \sum_{i=i}^m \frac{1}{p_i^d} \right) \\
&\geq ((4g-4)r)^d \left( 1 - \sum_{n=i}^\infty \frac{1}{n^d} \right) \\
&= C ((4g-4)r)^d.
\end{align*}
Since $d \geq 4$, we have $C > 0$, and we are done.
\end{proof}

\section{Comparing quadratic differentials}
\label{S:continuity}

Let $M$ be a finite-volume hyperbolic 3-manifold, and let $F$ be a fibered face.  To prove Theorem~\ref{T:location} we will need to see how the $3$-manifold $M$ influences the geometry of the surfaces lying over the axis for the monodromy $\phi$ of a fiber $S \in \R_+ \cdot F$.  We will need uniform control on the geometry of these surfaces as we vary the fibers.

Each monodromy of $M$ acts on Teichm\"uller space by translation along an axis.  Each such axis is defined by a quadratic differential on some Riemann surface.  The goal of this section is to describe a construction of McMullen \cite{Mc} that provides a bridge between the $3$-manifold $M$ and the quadratic differentials corresponding to its various fibers.

Let $\Gamma = \pi_1(M)$ and $\Gamma_0 \triangleleft \Gamma$ be the kernel of the abelianization, modulo torsion:
\[ 1 \to \Gamma_0 \to \pi_1(M) \to H_1(M;\Z)/\text{torsion} \to 1 .\]
Let $\widetilde M \to M$ denote the cover of $M$ associated to $\Gamma_0$.  Let $S \in \R_+ \cdot F$ be a connected fiber with monodromy $\phi:S \to S$.  The fibration $S \to M \to S^1$ lifts to a fibration over the universal covering $\mathbb R \to S^1$:
\[ \xymatrix{ \widetilde S \ar[r] \ar[d] & \widetilde M \ar[r]\ar[d] & \mathbb R \ar[d] \\
S \ar[r] & M \ar[r] & S^1}\]
The fiber $\widetilde S$ is a connected cover of $S$---in fact, it is precisely the cover corresponding to the $\phi$-invariant subspace of $H^1(S;\Z)$.

Let $\widetilde \phi_t$ denote the lift to $\widetilde M$ of the suspension flow on $M$ associated to $\phi$.  There is a product structure
\[  \widetilde M \cong \widetilde S \times \mathbb R;\]
indeed, the map $(x,t) \mapsto \widetilde \phi_t(x)$ gives a homeomorphism $\widetilde S \times \mathbb R \to \widetilde M$.

Pulling back the foliations $\F_\pm$  produces foliations $\widetilde \F_\pm$ on $\widetilde S$, and we can suspend these by $\widetilde \phi_t$ to produce foliations $\widetilde \F_\pm^M$ on $\widetilde M$.  Alternatively, $\widetilde \F_\pm^M$ is obtained by pulling back $\F_\pm^M$ to $\widetilde M$.

Let $\pi:\widetilde M \to \widetilde S$ denote the map obtained by collapsing each flow line of $\widetilde \phi_t$ to a point:
\[ \pi(\widetilde \phi_t(x)) = x.\]

Let $\Sigma$ be a fiber in $\R_+ \cdot F$.  By Theorem~\ref{T:friedtransverse}, we can assume that $\Sigma$ is transverse to $\phi_t$. Next, let $\widetilde \Sigma$ be one component of the preimage of $\Sigma$ in $\widetilde M$.  
The first return map of $\phi_t$ is the monodromy $\varphi:\Sigma \to \Sigma$, and from this one can show that $\pi|_{\widetilde \Sigma}:\widetilde \Sigma \to \widetilde \Sigma$ is a homeomorphism; see \cite[Corollary 3.4]{CLR}.  Since the stable and unstable foliations for $\Sigma'$ are obtained by intersecting $\Sigma$ with $\F_\pm^M$, it follows that this homeomorphism sends the leaves of the lifts of the stable and unstable foliations on $\widetilde \Sigma$ to those on $\widetilde S$.  That is, for every connected fiber $\Sigma \in \R_+ \cdot F$, we have identified the cover $\widetilde \Sigma$ homeomorphically with a fixed connected covering $\widetilde S$ of $S$ so that the preimages of the stable and unstable foliations for $\varphi$ under this identification pull back to $\widetilde \F_\pm$.

The monodromy $\varphi$ for $\Sigma$ does determine a pair of transverse measures $\mu_\pm(\Sigma)$ (unique up to scaling) on the stable and unstable foliations, respectively.  This defines a complex structure on $\Sigma$ and a holomorphic quadratic differential $q(\Sigma)$ for which the vertical and horizontal measured foliations are $\mu_\pm(\Sigma)$, respectively.  Furthermore, $q(\Sigma)$ defines the axis for $\varphi$ on Teichm\"uller space $\T(\Sigma)$.  Pulling $q(\Sigma)$ back to $\widetilde S$, we have a complex structure and holomorphic quadratic differential we denote $\widetilde q(\Sigma)$ on $\widetilde S$ whose vertical and horizontal foliations are precisely $(\widetilde \F_\pm,\widetilde \mu_\pm(\Sigma))$, where $\widetilde \mu_\pm(\Sigma)$ are the measures $\mu_\pm(\Sigma)$ pulled back to $\widetilde S$.

McMullen extends this construction of a complex structure and quadratic differential in a continuous way to every point of $\R_+ \cdot F$, not just the fibers \cite{Mc}.  More precisely, let $Q(\widetilde S,\widetilde{\F}_\pm)$ denote the set of pairs consisting of a complex structure on $\widetilde S$ together with a holomorphic quadratic differential for which the horizontal and vertical foliations are $\widetilde \F _\pm$.  We denote a point of $Q(\widetilde S,\widetilde \F_\pm)$ by $\widetilde q$, suppressing the complex structure in the notation.  An element $\widetilde q \in Q(\widetilde S,\widetilde \F_\pm)$ determines a Euclidean cone metric for which the leaves of $\widetilde{\F}_\pm$ are geodesics (with the leaves of $\widetilde {\F}_+$ orthogonal to those of $\widetilde {\F}_-$), and by an abuse of notation we denote this metric $\widetilde q$.  We topologize $Q(\widetilde S,\widetilde{\F}_\pm)$ with the topology of locally uniform convergence of these metrics.  Specifically, a sequence  $\{q_n\} \subset Q(\widetilde S)$ converges to $q \in Q(\widetilde S)$ if for any compact set $K \subset \widetilde S$, $q_n:K \times K \to \mathbb R$ converges uniformly to $q:K \times K \to \mathbb R$.  

The main consequence of McMullen's work that we will need is the following.

\begin{theorem} \label{T:mcmullen}
There is a continuous map $\widetilde \q:\R_+ \cdot F \to Q(\widetilde S,\widetilde{\F}_\pm)$ which is constant on rays, and has the property that for every fiber $S \in \R_+ \cdot F$, $\widetilde \q(S) = \widetilde q(S)$, up to scaling and Teichm\"uller deformation.
\end{theorem}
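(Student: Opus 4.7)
The plan is to follow McMullen's construction from \cite{Mc}, whose essential content is the existence of the desired map $\widetilde{\q}$. The strategy is to produce, for each $\eta \in \R_+ \cdot F$, a pair of transverse measures $(\widetilde{\mu}_+(\eta), \widetilde{\mu}_-(\eta))$ on the fixed topological foliations $\widetilde{\F}_\pm$ of $\widetilde{S}$, depending continuously on $\eta$, agreeing up to scaling for every fiber $\Sigma$ with the pullback of the monodromy-invariant measures $(\mu_+(\Sigma),\mu_-(\Sigma))$, and having the property that rescaling $\eta$ by a factor $t > 0$ produces a Teichm\"uller-deformed pair. The quadratic differential $\widetilde{\q}(\eta)$ is then extracted from this pair via the standard correspondence between a pair of topologically transverse measured foliations and a Euclidean cone structure together with a holomorphic quadratic differential having those foliations as its horizontal and vertical data.

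First I would exploit the product structure $\widetilde{M} \cong \widetilde{S} \times \R$ provided by the lifted suspension flow $\widetilde{\phi}_t$. Each class $\eta \in \R_+ \cdot F$ is represented by a closed 1-form $\omega_\eta$ on $M$ that pairs positively with the flow direction, which is possible precisely because $\eta$ lies in the cone over a fibered face. The pullback of $\omega_\eta$ to $\widetilde{M}$ is exact, with primitive $h_\eta : \widetilde{M} \to \R$ that is strictly increasing along flow lines and depends linearly and continuously on $\eta$. I would then define the pair $(\widetilde{\mu}_+(\eta),\widetilde{\mu}_-(\eta))$ by decreeing that every deck transformation $g$ of $\widetilde{M} \to M$ scale them by $\Lambda_F(\eta)^{\pm \eta(g)}$, extending this equivariantly from a transversal in one fundamental domain by pushing forward along the flow at the rates prescribed by $h_\eta$.

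For a primitive integral $\eta = [\Sigma]$, the function $h_\eta$ measures the return time to successive lifts of the fiber $\Sigma$, and the first-return map of the flow to a lift $\widetilde{\Sigma}$ is precisely the lifted monodromy; consequently $(\widetilde{\mu}_+(\Sigma),\widetilde{\mu}_-(\Sigma))$ recovers, up to an overall scalar, the pullback of $(\mu_+(\Sigma),\mu_-(\Sigma))$, which yields $\widetilde{\q}(\Sigma) = \widetilde q(\Sigma)$ up to scaling. Constancy on rays follows from the identity $\Lambda_F(t\eta) = \Lambda_F(\eta)^{1/t}$ of Theorem~\ref{T:frieddilatation}: rescaling $\eta$ by $t$ rescales $h_\eta$ by $t$ while rescaling $\log\Lambda_F$ by $1/t$, so the pair $(\widetilde{\mu}_+(t\eta),\widetilde{\mu}_-(t\eta))$ differs from $(\widetilde{\mu}_+(\eta),\widetilde{\mu}_-(\eta))$ only by a Teichm\"uller deformation, exactly the ambiguity allowed by the statement.

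The main obstacle is verifying continuity in the locally uniform topology on cone metrics in $Q(\widetilde S,\widetilde{\F}_\pm)$, i.e., showing that small perturbations of $\eta$ produce uniformly small changes in the transverse measures on every compact $K \subset \widetilde S$. This reduces to obtaining uniform control on how long the flow must run to sweep $K$ into a reference region, combined with continuity of $\Lambda_F$ (Theorem~\ref{T:frieddilatation}) and of the closed form $\omega_\eta$ in $\eta$. Essentially all of the hard analytic work is carried out in \cite{Mc}, so the proof would largely consist of citing the relevant results from that paper and checking that our fibered face and cover satisfy the hypotheses.
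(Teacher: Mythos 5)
Your overall route is the same as the paper's: Theorem~\ref{T:mcmullen} is not proved from scratch but is deduced from McMullen's work, and the paper's own argument amounts to citing \cite[Theorem 9.3]{Mc}, fixing a normalization to make the map well defined, and sketching continuity via the train-track description in \cite[Theorem 8.1]{Mc}. Where your write-up is weaker is the middle step, where you ``decree'' that each deck transformation $g$ scales $\widetilde \mu_\pm(\eta)$ by $\Lambda_F(\eta)^{\pm\eta(g)}$ and then ``extend equivariantly from a transversal in one fundamental domain by pushing forward along the flow.'' That is not a construction: a transverse measure on $\widetilde{\F}_\pm$ must be holonomy invariant, and the leaves return through any fundamental domain in a complicated way, so prescribing data on a fundamental domain together with a scaling rule does not by itself produce (nor uniquely determine) such a measure, and it gives no handle on continuity in $\eta$. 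The existence, uniqueness up to scale, and continuous dependence of these equivariant measures is exactly the nontrivial content here; McMullen obtains them as eigenvectors of a continuously varying family of (twisted) Perron--Frobenius matrices attached to train tracks carrying $\widetilde{\F}_\pm$, which is the mechanism the paper invokes. Your plan is sound only insofar as it defers to \cite{Mc} for precisely this point, so the deferral is doing all the work in that paragraph.

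A second, smaller discrepancy: the theorem asserts that $\widetilde \q$ is literally constant on rays; the ``up to scaling and Teichm\"uller deformation'' ambiguity refers only to the comparison $\widetilde \q(S) = \widetilde q(S)$ at fibers. Your argument yields constancy on rays only up to Teichm\"uller deformation, which is weaker than the statement. In fact the equivariance factor is unchanged under $\eta \mapsto t\eta$, since $\Lambda_F(t\eta)^{(t\eta)(g)} = \Lambda_F(\eta)^{\eta(g)}$ by Theorem~\ref{T:frieddilatation}, so the projective class of the pair of transverse measures depends only on the ray; once one fixes a normalization (the paper requires a chosen rectangle with sides in $\widetilde{\F}_\pm$ to have both side lengths equal to $1$, compatible with normalizing the Perron--Frobenius eigenvectors) the map becomes genuinely constant on rays, and this normalization is also what turns McMullen's ``map defined up to scaling and Teichm\"uller deformation'' into the well-defined continuous map claimed in the statement.
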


The map $\widetilde \q$ is given in \cite[Theorem 9.3]{Mc}, though it is only defined up to scaling and Teichm\"uller deformation, and so one must make some choices to obtain a well-defined map.  This can be done, for example, by choosing a rectangle with sides in $\widetilde {\F}_\pm$, and then for any $\eta \in \R_+ \cdot F$, we normalize the quadratic differential $\widetilde \q(\eta)$ by requiring that the side lengths are both $1$.  Continuity follows from the description in terms of train tracks:  the horizontal and vertical foliations are carried by train tracks $\widetilde \tau_\pm$ on $\widetilde S$, and the weights on the branches determined by the vertical and horizontal foliations for $\widetilde \q$ are given by eigenvectors of a continuously varying family of Perron-Frobenius matrices and appropriate equivariance conditions; see the proof of \cite[Theorem 8.1]{Mc}.  (Our normalization convention can be chosen to correspond to a normalization in the Perron-Frobenius eigenvectors).  Since the charts for the Euclidean metric are obtained by integrating these two measures, and since the measures vary continuously, so do the metrics.

\section{Dehn filling}
\label{S:df}

Let $M$ be a finite-volume cusped hyperbolic 3-manifold with $r$ cusps.  Let $\widehat M$ denote the manifold obtained by removing the interiors of the cusps, so that $\widehat M$ is a compact manifold with $r$ boundary components $\partial_1 \widehat M,\ldots,\partial_r \widehat M$, each homeomorphic to a torus. 

A \emph{slope} on $\partial_i \widehat M$ is either the isotopy class of an unoriented essential simple closed curve in $\partial_i \widehat M$ or $\infty$.  If we choose a basis for $\pi_1(\partial_i \widehat M) \cong \Z^2$, then a slope $\beta_i \neq \infty$ corresponds to a coprime pair of integers $\beta_i = (p_i,q_i)$, unique up to sign.

Suppose $\beta = (\beta_1,\ldots,\beta_r)$ is a choice of slopes in $\partial_1 \widehat M,\ldots,\partial_r \widehat M$, respectively.  The $\beta$-\emph{Dehn filling} of $M$ is the 3-manifold $M(\beta)$ obtained from $\widehat M$ by the following procedure:
\begin{itemize}
\item For each $i$ with $\beta_i \neq \infty$, we glue a solid torus $S^1 \times D^2$ to $\partial_i \widehat M$ so that the curve $\{*\} \times \partial D^2$ represents $\beta_i$.
\item For each $i$ with $\beta_i = \infty$, we reglue the original cusp (or, what is the same thing, we can leave that cusp alone from the start).
\end{itemize}
The homeomorphism type of $M(\beta)$ depends only on $\beta$.

We can view the set of slopes on $\partial_i \widehat M$ as points in $\Z^2 \cup \{\infty\} \subset \R^2 \cup \{\infty\} \cong S^2$.  We say that a sequence of slopes $\{\beta_i^n\}_{n=1}^\infty$ on $\partial_i \widehat M$ tends to $\infty$ if it does in $\R^2 \cup \{\infty\}$.

The inclusion $\widehat M \to M$ induces an isomorphism $\pi_1(M) \cong \pi_1(\widehat M)$.  If we compose this isomorphism with the homomorphism $\pi_1(\widehat M) \to \pi_1(M(\beta))$ induced by inclusion, we obtain a canonical homomorphism $\pi_1(M) \to \pi_1(M(\beta))$.  By Van Kampen's theorem, this is surjective.

The next result, Thurston's Dehn surgery theorem \cite[Theorem 5.8.2]{Th}, states that Dehn filling on a hyperbolic $3$-manifold usually produces a hyperbolic $3$-manifold.
\begin{theorem} \label{T:Dehn surgery theorem}
Let $M$ be a finite-volume hyperbolic 3-manifold with $r$ cusps.  Suppose $\{ \beta^n\}_{n=1}^\infty$ is a sequence of $k$-tuples of slopes with $\beta^n = (\beta^n_1,\ldots,\beta^n_r)$ and $\beta_i^n$ a slope on $\partial_i \widehat M$ for all $i$ and $n$.  Assume that for all $i$, $\beta_i^n$ tends to $\infty$.  Then $M(\beta^n)$ is hyperbolic for all but finitely many $n$.  

Moreover, for appropriate choices of the holonomy homomorphisms $\pi_1(M),\pi_1(M(\beta^n)) \to \PSL_2(\C)$ within the respective conjugacy classes, the composition
\[ \pi_1(M) \to \pi_1(M(\beta^n)) \to \PSL_2(\C) \]
converges pointwise to
\[ \pi_1(M) \to \PSL_2(\C).\]
\end{theorem}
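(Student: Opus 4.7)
My plan is to follow Thurston's original deformation-theoretic argument. Starting from the discrete faithful holonomy $\rho_0 \colon \pi_1(M) \to \PSL_2(\C)$ of the given complete hyperbolic structure on $M$, I would study the character variety of $\pi_1(M)$ in a neighborhood of $[\rho_0]$. Calabi--Weil rigidity for cusped finite-volume hyperbolic $3$-manifolds guarantees that this character variety is locally smooth of complex dimension $r$, and admits convenient local coordinates $u = (u_1, \ldots, u_r)$, where $u_i$ is the complex translation length of a chosen meridian generator for the $i$th cusp and $u = 0$ corresponds to $[\rho_0]$. Fixing a basis of $\pi_1(\partial_i \widehat M)$, the holonomies of the chosen meridian and longitude give analytic complex-length functions $\mu_i(u)$ and $\lambda_i(u)$ that vanish at $u=0$ but have nonvanishing derivatives there.

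For a slope tuple $\beta^n$ with $\beta_i^n = (p_i^n, q_i^n)$ at the filled cusps, the next step is to find a deformation $\rho_{\beta^n}$ of $\rho_0$ that factors through the Van Kampen quotient $\pi_1(M) \twoheadrightarrow \pi_1(M(\beta^n))$. This factoring holds precisely when each filling curve $\beta_i^n$ is sent to the identity in $\PSL_2(\C)$, and by a standard cusp computation this condition translates into the complex Dehn surgery equations
\[ p_i^n \, \mu_i(u) + q_i^n \, \lambda_i(u) = 2 \pi i \qquad (\beta_i^n \neq \infty). \]
Dividing the $i$th equation by $\max(|p_i^n|, |q_i^n|)$ and using the nonvanishing-derivative property of $(\mu_i, \lambda_i)$, the implicit function theorem yields a unique solution $u(\beta^n)$ near $0$ provided each $\beta_i^n$ is sufficiently large, with unfilled cusps simply contributing $u_i = 0$. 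In particular $u(\beta^n) \to 0$, and hence $\rho_{\beta^n} \to \rho_0$ pointwise on $\pi_1(M)$ as $n \to \infty$.

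The main obstacle is the geometric realization: promoting the purely algebraic $\rho_{\beta^n}$ into an actual complete hyperbolic structure on $M(\beta^n)$, from which the holonomy homomorphism of $\pi_1(M(\beta^n))$ asserted by the theorem is read off. To carry this out, I would use $\rho_{\beta^n}$ to deform the developing map on $\widehat M$, truncate the cusps slightly so that the images of the torus boundary components become embedded tubes around closed geodesics in $\mathbb H^3$, and then glue a standard hyperbolic solid torus into each of these tubes. The Dehn surgery equation guarantees that the filling slope is a $2\pi$-rotation, so the gluing produces a smooth hyperbolic manifold, and a compactness argument on developing maps (using the smallness of the deformation) confirms it is complete and homeomorphic to $M(\beta^n)$; this gives hyperbolicity for all but finitely many $n$. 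The claimed pointwise convergence of the holonomy compositions is then immediate from $\rho_{\beta^n} \to \rho_0$, the ``appropriate choices'' being the representatives induced by the developing maps just constructed.
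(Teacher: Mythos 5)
The paper does not prove this statement at all: it is quoted as Thurston's hyperbolic Dehn surgery theorem, with a citation to \cite[Theorem 5.8.2]{Th}, so there is no in-paper argument to compare yours against. What you have written is a reconstruction of Thurston's own deformation-theoretic proof (character variety near the discrete faithful representation, the complex Dehn surgery equations $p_i\mu_i(u)+q_i\lambda_i(u)=2\pi i$, and completion/gluing of solid tori), which is exactly the standard route, and at the level of a sketch it is the right outline.

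Two points in your sketch, however, paper over the real content. First, the local structure of the character variety is misattributed: Calabi--Weil rigidity concerns closed hyperbolic manifolds (vanishing of $H^1$) and does not give what you need; for a cusped $M$ the statement that the character variety is smooth of complex dimension exactly $r$ at $[\rho_0]$, parametrized by the cusp parameters $u_i$, is Thurston's deformation theorem, itself a substantial result (the ``half lives, half dies'' Poincar\'e duality argument gives the lower bound $r$, and the upper bound and smoothness require separate work). Second, in the implicit function theorem step, nonvanishing of the individual derivatives $\mu_i'(0)$ and $\lambda_i'(0)$ is not enough: after dividing by $\max(|p_i^n|,|q_i^n|)$ you need $\hat p\,\mu_i'(0)+\hat q\,\lambda_i'(0)\neq 0$ for every real direction $(\hat p,\hat q)$, i.e.\ that $\mu_i'(0)$ and $\lambda_i'(0)$ are linearly independent over $\R$; this holds precisely because the ratio $\lambda_i'(0)/\mu_i'(0)$ is the modulus of the cusp torus and hence has nonzero imaginary part, and without that observation the argument would fail for slopes near a degenerate real direction. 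Finally, the ``geometric realization'' paragraph (that the deformed, incomplete structure completes to a smooth hyperbolic structure on $M(\beta^n)$ exactly when the surgery equations hold with integral coefficients) is the remaining analytic heart of Thurston's proof; your description is the correct picture, but as written it is an outline rather than an argument. Since the paper itself treats the theorem as a black box, citing it as you use it would be the appropriate course; as a self-contained proof your proposal has the gaps just named.
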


We will require the following simple application of Theorem~\ref{T:Dehn surgery theorem}.

\begin{corollary} \label{C:Dehn Surgery Corollary}
Let $M$ be a finite-volume hyperbolic 3-manifold with $r$ cusps and let $\alpha \in \pi_1(M)$ be any nontrivial element.  For each $i = 1,\ldots,r$, there are finitely many slopes $\beta_i^1,\ldots,\beta_i^{s_i}$ on $\partial_i \widehat M$, $\beta_i^{j} \neq \infty$ for all $j$, so that if $\beta_1,\ldots,\beta_r$ are slopes with $\beta_i \neq \beta_i^j$ for each $i = 1,\ldots,r$ and $j = 1,\ldots,s_i$, then $\alpha$ represents a nontrivial element of $\pi_1(M(\beta_1,\ldots,\beta_r))$. 
\end{corollary}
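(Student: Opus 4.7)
The plan is to proceed by contradiction, leveraging Thurston's Dehn surgery theorem (Theorem~\ref{T:Dehn surgery theorem}) together with the discreteness and faithfulness of the holonomy representation of $M$. Suppose the conclusion fails. Then for every choice of finite sets $F_i$ of non-$\infty$ slopes on $\partial_i \widehat M$, there exists a tuple $\beta = (\beta_1,\ldots,\beta_r)$ with $\beta_i \notin F_i$ for each $i$, such that $\alpha$ becomes trivial in $\pi_1(M(\beta))$.

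From this I would build a sequence $\{\beta^n\}_{n \geq 1}$ of such bad tuples inductively: having chosen $\beta^1,\ldots,\beta^{n-1}$, set $F_i^{n-1} = \{\beta_i^m : m < n,\ \beta_i^m \neq \infty\}$, apply the failed conclusion to obtain $\beta^n$ with each $\beta^n_i \notin F_i^{n-1}$ and with $\alpha$ trivial in $\pi_1(M(\beta^n))$. Passing to a subsequence (diagonalizing over the $r$ cusps), I may assume that for each $i$ the sequence $\beta^n_i$ is either identically $\infty$ or consists entirely of pairwise distinct non-$\infty$ slopes; in the latter case, since only finitely many slopes on $\partial_i \widehat M$ have bounded norm, $\beta^n_i \to \infty$ in $\Z^2 \cup \{\infty\}$. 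If every coordinate were constantly $\infty$, then $M(\beta^n) = M$ for all $n$, contradicting $\alpha \neq 1$ in $\pi_1(M)$. So in every coordinate the sequence $\beta^n_i$ tends to $\infty$ in the sense required by Theorem~\ref{T:Dehn surgery theorem}.

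Now I would invoke Thurston's theorem: for all sufficiently large $n$, $M(\beta^n)$ is hyperbolic, and with appropriate choices of representatives, the holonomies $\rho_n \colon \pi_1(M) \to \pi_1(M(\beta^n)) \to \PSL_2(\C)$ converge pointwise to the holonomy $\rho \colon \pi_1(M) \to \PSL_2(\C)$ of $M$. Since $M$ is a finite-volume hyperbolic 3-manifold, $\rho$ is discrete and faithful, so $\rho(\alpha) \neq 1$. By pointwise convergence, $\rho_n(\alpha) \neq 1$ for all sufficiently large $n$, so $\alpha$ represents a nontrivial element of $\pi_1(M(\beta^n))$ for such $n$. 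This contradicts the defining property of $\beta^n$, completing the proof.

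The only mildly delicate step is the diagonal extraction and the verification that the hypothesis of Theorem~\ref{T:Dehn surgery theorem} accommodates coordinates that are constantly $\infty$; this is fine because a constant-$\infty$ sequence converges to $\infty$ in $\Z^2 \cup \{\infty\}$ and the corresponding cusps simply remain cusps of $M(\beta^n)$, so the hyperbolicity and holonomy-convergence conclusions still apply.
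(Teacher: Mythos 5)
Your argument is correct and is exactly the intended ``simple application'' of Theorem~\ref{T:Dehn surgery theorem} that the paper leaves unwritten: negate the statement, extract a sequence of filling tuples whose coordinates are pairwise distinct or constantly $\infty$ (hence all tend to $\infty$), and use convergence of the holonomies together with faithfulness of the discrete holonomy of $M$ to see that $\alpha$ survives in $\pi_1(M(\beta^n))$ for large $n$, a contradiction. Your handling of the constant-$\infty$ coordinates and of why distinct slopes must leave every bounded set is also the right bookkeeping, so there is nothing to add.
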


\section{Lengths of curves}

In this section we recall three notions of length for a simple closed curve in a surface $S$ equipped with a complex structure $X$, and we recall various well-known relationships between them.  Here, and throughout, we say that a simple closed curve is \emph{essential} if it is homotopic neither to a point nor a puncture.  For further details, see \cite{Ah1,Ah2,Ma,Wo}.

\p{Extremal length.} A Borel metric on $S$ with respect to $X$ is a metric that is locally given by $\rho(z)|dz|$, where $\rho \geq 0$ is a Borel measurable function and $z$ is a local coordinate for the complex structure $X$.

Let $\alpha$ be a simple closed curve in $S$.  The {\em extremal length} of $\alpha$ with respect to $X$ is
\[ \ext_X(\alpha) = \sup_{\rho} \frac{L_\rho(\alpha)^2}{\Area(\rho)}, \]
where the supremum is over all Borel metrics $\rho$ in the conformal class of $X$, $L_\rho(\alpha)$ is the infimum of $\rho$-lengths of closed curves in the homotopy class of $\alpha$, and $\Area(\rho)$ is the area of $S$ with respect to $\rho$.

\p{Modulus.} Another number associated to $\alpha$ with respect to $X$ is the {\em modulus}.  Recall that if an annulus $A$ is conformally equivalent to $\{ z \in \C : 1 < |z| < R \}$, then the modulus of $A$ is $m_X(A) = \log(R)/2\pi$.   The modulus of a simple closed curve $\alpha$ is defined as
\[  m_X(\alpha) = \sup_{A \supset \alpha} m_X(A), \]
where the supremum is taken over all embedded annuli in $S$ containing a curve homotopic to $\alpha$.
When $\alpha$ is inessential, then $m_X(\alpha)=\infty$. 

We can alternatively define the modulus of $A$ via extremal lengths:
\[ m_X(A) = \sup_{\rho} \frac{L_\rho(A)^2}{\Area(\rho)}, \]
where the supremum is over all Borel metrics $\rho$ in the conformal class of $X$, and where $L_\rho(A)$ is the infimum of the $\rho$-lengths of all paths in $A$ connecting distinct boundary components.

\p{Modulus versus extremal length.} The relationship between modulus and extremal length is provided by the following; see, for example, \cite[Section 1.D]{Ah2}.

\begin{proposition} \label{P:extremal and modulus}
Let $\alpha$ be a simple closed curve in $S$ and $X$ a complex structure on $S$.  We have
\[ \ext_X(\alpha) = 1/m_X(\alpha).\]
\end{proposition}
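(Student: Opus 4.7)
The plan is to establish $\ext_X(\alpha) \cdot m_X(\alpha) = 1$ by proving each inequality separately; both rest on classical length--area arguments, treated in detail in \cite[Section 1.D]{Ah2}.

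For $\ext_X(\alpha) \leq 1/m_X(\alpha)$, I would fix an embedded annulus $A \subset S$ homotopic to $\alpha$ and a Borel metric $\rho$ in the conformal class of $X$. Using a conformal identification $A \cong S^1 \times (0, 2\pi m_X(A))$ with $S^1$ of circumference $2\pi$, each core circle $C_t = S^1 \times \{t\}$ is homotopic to $\alpha$, so $L_\rho(C_t) \geq L_\rho(\alpha)$. Cauchy--Schwarz applied along each $C_t$, followed by integration in $t$ and Fubini, yields $m_X(A)\, L_\rho(\alpha)^2 \leq \Area_A(\rho) \leq \Area(\rho)$. Hence $L_\rho(\alpha)^2/\Area(\rho) \leq 1/m_X(A)$; taking the supremum over $\rho$ gives $\ext_X(\alpha) \leq 1/m_X(A)$, and then taking the supremum of $m_X(A)$ over $A$ (which equals $m_X(\alpha)$ by definition) produces $\ext_X(\alpha) \leq 1/m_X(\alpha)$.

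For the reverse inequality, I would construct a test metric $\rho$ with $L_\rho(\alpha)^2 / \Area(\rho) = 1/m_X(\alpha)$. The natural candidate---the flat cylindrical metric on an annulus $A$ extended by zero on $S \setminus A$---is inadequate for a generic $A$, since a representative of $\alpha$ can typically be isotoped onto $\partial A$ or into $S \setminus A$, where the metric vanishes and the $\rho$-length collapses to $0$. The remedy is to take $A = A^\ast$ to be the characteristic ring domain of the Jenkins--Strebel quadratic differential associated to the free homotopy class of $\alpha$ (whose existence is guaranteed by Strebel's theorem). For this $A^\ast$, one has $m_X(A^\ast) = m_X(\alpha)$, the boundary $\partial A^\ast$ is a singular graph of saddle connections containing no simple loop homotopic to $\alpha$, and maximality prevents isotoping $\alpha$ into $S \setminus A^\ast$. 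Hence every representative of $\alpha$ meets $A^\ast$ essentially and, by a winding-number argument in the cylindrical model, has length at least $2\pi$. Combined with $\Area(\rho) = 4\pi^2 m_X(\alpha)$, this gives $\ext_X(\alpha) \geq 1/m_X(\alpha)$.

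The main technical obstacle is the lower bound: it depends on Strebel's existence theorem for the Jenkins--Strebel quadratic differential, together with the singular structure of $\partial A^\ast$, without which the natural cylindrical test metric would be defeated by the trivial device of isotoping $\alpha$ onto a curve of zero $\rho$-length in $S \setminus A$.
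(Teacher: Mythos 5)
Your upper-bound direction ($\ext_X(\alpha) \leq 1/m_X(A)$ for every embedded annulus $A$, via Cauchy--Schwarz and Fubini on the cylinder model) is correct and standard, and invoking the Jenkins--Strebel differential for the reverse inequality is indeed the right idea (the paper itself gives no argument, only the citation to Ahlfors, and the standard proofs in the literature run exactly through the Strebel differential). However, your execution of the lower bound has a genuine gap: the test metric ``flat on $A^\ast$, extended by zero on $S \setminus A^\ast$'' fails for precisely the reason you flagged for a generic annulus. The complement $S\setminus A^\ast$ is the critical graph, and the homotopy class of $\alpha$ \emph{always} contains closed curves lying entirely in that graph: the closed characteristic cylinder maps onto $S$ with its two boundary circles carried to closed edge-paths in the critical graph, and each such edge-path is freely homotopic to the core, hence to $\alpha$ (on the torus the critical graph is itself a simple closed geodesic isotopic to $\alpha$, so even the embedded-representative reading does not save you). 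Since $L_\rho(\alpha)$ is an infimum over all closed curves in the class, these representatives have $\rho$-length $0$, so $L_\rho(\alpha)=0$ and your metric yields nothing. Accordingly, your claims that $\partial A^\ast$ contains no loop homotopic to $\alpha$, that maximality prevents pushing $\alpha$ into $S\setminus A^\ast$, and that every representative winds in the cylinder with length at least $2\pi$, are false as stated; a representative can also achieve its ``horizontal travel'' for free along saddle connections where your $\rho$ vanishes.

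The repair is easy and is the standard argument: take $\rho=|q|^{1/2}$ on all of $S$ (the critical graph has measure zero, so $\Area(\rho)$ is still $4\pi^2 m(A^\ast)$ in your normalization), and bound the length of an arbitrary closed curve $\gamma$ freely homotopic to $\alpha$ from below by its total transverse measure with respect to the vertical foliation $\mathcal{F}_v$ of $q$: in natural coordinates $\ell_\rho(\gamma) \geq \int_\gamma |dx| \geq i(\alpha,\mathcal{F}_v)$, which equals the circumference of the cylinder because the transverse measure of a closed curve is a free-homotopy invariant realized by the (quasi-transverse) core circle. This handles representatives that run along or through the critical graph, and gives $\ext_X(\alpha) \geq 1/m(A^\ast) \geq 1/m_X(\alpha)$; note that only the trivial inequality $m(A^\ast) \leq m_X(\alpha)$ is needed here, not your asserted equality $m(A^\ast) = m_X(\alpha)$, which is a deeper fact (essentially equivalent to the proposition itself, or to Jenkins' extremality theorem, which you could alternatively cite outright). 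You should also restrict to essential $\alpha$, as Strebel's existence theorem requires it; for inessential $\alpha$ the statement is the degenerate $0 = 1/\infty$.
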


\p{Hyperbolic length.} There is a third measurement associated to a closed curve $\alpha$ with respect to $X$.  Suppose $(S,X)$ can by uniformized as a quotient of the hyperbolic plane (for example, if $\chi(S) < 0$), that is, there is a conformal homeomorphism between $(S,X)$ and a quotient of the hyperbolic plane by a discrete, torsion-free subgroup of the orientation-preserving isometry group.  In a hyperbolic surface, every essential closed curve has a unique geodesic representative.  The length of the geodesic representative of $\alpha$ is thus an invariant of $\alpha$ that we denote $\ell_X(\alpha)$.  This is called the \emph{hyperbolic length} of $\alpha$.

\p{Hyperbolic collars}  Keen's collar lemma \cite{Ke} provides a quantitative lower bound on the width of an annular neighborhood of a simple closed geodesic in a hyperbolic surface.  From this one obtains lower bounds on the length of a curve intersecting the given curve.  This is stated conveniently in terms of the geometric intersection number $i(\alpha,\beta)$ for a pair of simple closed curves $\alpha$ and $\beta$.

\begin{lemma} \label{L:collar}
There is a function $F:\R_+ \to \R_+$ that satisfies
\[ \displaystyle{\lim_{x \to 0} F(x) = \infty} \]
and also satisfies the following property: if $\alpha$ and $\beta$ are simple closed curves in $S$, then for any $X \in \T(S)$ we have
\[ \ell_X(\beta) \geq i(\alpha,\beta)F(\ell_X(\alpha)). \]
\end{lemma}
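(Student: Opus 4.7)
The plan is to deduce the lemma directly from Keen's collar lemma. Let $X \in \T(S)$, and let $\alpha^*$, $\beta^*$ denote the $X$-geodesic representatives of $\alpha$, $\beta$; they intersect transversely in exactly $i(\alpha,\beta)$ points. If $i(\alpha,\beta) = 0$ the inequality is automatic, so I may assume $\alpha^* \neq \beta^*$ and $i(\alpha,\beta) \geq 1$.

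First I would recall Keen's collar lemma in the following quantitative form: there exists a function $w:\R_+ \to \R_+$ with $w(x) \to \infty$ as $x \to 0$ (explicitly, $w(x) = \mathrm{arcsinh}(1/\sinh(x/2))$) such that the open neighborhood
\[ C(\alpha) \;=\; \{\, p \in (S,X) : d_X(p,\alpha^*) < w(\ell_X(\alpha)) \,\} \]
is an embedded open annulus with core $\alpha^*$. The distance from $\alpha^*$ to each boundary component of $C(\alpha)$ is $w(\ell_X(\alpha))$, so the width of $C(\alpha)$ in the direction normal to the core is $2 w(\ell_X(\alpha))$.

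Next I would analyze $\beta^* \cap C(\alpha)$. Each of the $i(\alpha,\beta)$ intersection points of $\beta^*$ with $\alpha^*$ lies in a component of $\beta^* \cap C(\alpha)$, namely a geodesic arc crossing the core $\alpha^*$ transversely in exactly that point. Since $\beta^* \neq \alpha^*$, such an arc cannot be a closed loop or stay on one side of $\alpha^*$; being geodesic in a hyperbolic annulus and meeting the core transversely, it must exit $C(\alpha)$ through the two distinct boundary components, and so has length at least $2 w(\ell_X(\alpha))$. Distinct intersection points of $\beta^*$ with $\alpha^*$ give rise to distinct arcs, so there are $i(\alpha,\beta)$ such arcs in total, and summing their lengths gives
\[ \ell_X(\beta) \;\geq\; 2\, i(\alpha,\beta)\, w(\ell_X(\alpha)). \]
Setting $F(x) = 2 w(x)$ yields the conclusion, since $w(x) \to \infty$ as $x \to 0$.

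There is no substantive obstacle here; the only point requiring a moment's care is the claim that each component of $\beta^* \cap C(\alpha)$ meeting $\alpha^*$ traverses the full width of the collar, which follows from the fact that a geodesic in a hyperbolic annulus distinct from the core meets the core in at most one point and, when it does, must connect the two boundary components of the annulus.
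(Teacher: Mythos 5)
Your argument is correct and is essentially the paper's approach: the paper simply invokes Keen's collar lemma (citing \cite[Lemma 13.6]{primer}) with the explicit function $F(x) = 2\sinh^{-1}\bigl(1/\sinh(x/2)\bigr)$, and your proof is exactly the standard derivation behind that citation, yielding the same $F$. The one step you flag---that each component of $\beta^*$ in the collar crosses the core at most once and must traverse the full half-width on each side---is handled correctly by lifting to the universal cover of the collar, so there is no gap.
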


In fact, we can take the function $F$ from Lemma~\ref{L:collar} to be
\[ F(x) =  2\sinh^{-1}\left( \frac{1}{\sinh(x/2)} \right); \]
see, for example, \cite[Lemma 13.6]{primer}.

\p{Hyperbolic length versus extremal length.}  Since a hyperbolic metric on $S$ is Borel, the hyperbolic length of a curve can be related to its extremal length directly from the definition.  The following result of Maskit gives stronger bounds, independent of the topology of $S$ \cite{Ma}.

\begin{proposition} \label{P:hyperbolic and extremal}
For $\alpha$ an essential closed curve in $S$ we have
\[ \frac{\ell_X(\alpha)}{\pi} \leq \ext_X(\alpha) \leq \frac{\ell_X(\alpha)}{2} e^{\ell_X(\alpha)/2}.\]
\end{proposition}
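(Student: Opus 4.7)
The plan is to use Proposition~\ref{P:extremal and modulus} to convert the claim into the equivalent pair of modulus estimates
\[ \frac{2}{\ell_X(\alpha)\, e^{\ell_X(\alpha)/2}} \leq m_X(\alpha) \leq \frac{\pi}{\ell_X(\alpha)}, \]
and then to bound each modulus by exhibiting concrete annuli coming from the hyperbolic geometry of $X$ and computing their moduli in Fermi coordinates $(s,t)$ around the geodesic representative of $\alpha$. In these coordinates the hyperbolic metric is $dt^2 + \cosh^2(t)\, ds^2$, and the substitution $d\tau = dt/\cosh(t)$ rewrites it as $\cosh^2(t)(ds^2 + d\tau^2)$, displaying the conformal type as a flat cylinder whose height equals the range of $\tau$.

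Write $\ell = \ell_X(\alpha)$. For the upper bound on $m_X(\alpha)$, I would pass to the annular cover $\widetilde X_\alpha \to X$ corresponding to the infinite cyclic subgroup generated by $\alpha$. This cover is isometric to $\mathbb{H}^2/\langle\gamma\rangle$, where $\gamma$ is a hyperbolic translation of length $\ell$, and the Fermi description shows that $t$ ranges over all of $\mathbb{R}$, so $\tau$ ranges over the full interval $(-\pi/2, \pi/2)$. Thus $\widetilde X_\alpha$ is conformally a flat cylinder of circumference $\ell$ and height $\pi$, with modulus $\pi/\ell$. Any embedded annulus in $S$ whose core is homotopic to $\alpha$ lifts homeomorphically to an embedded annulus in $\widetilde X_\alpha$, so by monotonicity of modulus under embeddings its modulus is at most $\pi/\ell$. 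Taking the supremum over all such annuli yields $m_X(\alpha) \leq \pi/\ell$.

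For the lower bound on $m_X(\alpha)$, I would apply Keen's collar lemma (the source of Lemma~\ref{L:collar}) to produce an embedded annulus $A$ of hyperbolic width $w = \sinh^{-1}(1/\sinh(\ell/2))$ on each side of the geodesic representative of $\alpha$. In the same Fermi coordinates, $A$ is conformally a flat cylinder of circumference $\ell$ and height $2\tau(w)$, where $\tau(w) = 2\arctan(\tanh(w/2))$ is the Gudermannian of $w$. The defining identity $\sinh(w) = 1/\sinh(\ell/2)$ together with the half-angle formula gives $\tanh(w/2) = e^{-\ell/2}$, so
\[ m_X(\alpha) \geq m_X(A) = \frac{4\arctan(e^{-\ell/2})}{\ell}. \]
Finally the elementary inequality $\arctan(u) \geq u/2$ for $u \in (0,1]$, applied with $u = e^{-\ell/2}$, yields $m_X(\alpha) \geq 2/(\ell\, e^{\ell/2})$, which is the desired bound.

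The substantive work is the pair of modulus computations via Fermi coordinates; once these are in hand both bounds follow almost mechanically. The main obstacle is really bookkeeping: extracting the identity $\tanh(w/2) = e^{-\ell/2}$ at the Keen collar width, which is precisely what converts the raw modulus estimate into the $e^{\ell/2}$ shape appearing in the proposition.
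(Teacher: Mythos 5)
Your proof is correct, and it is essentially the standard argument behind the result the paper simply quotes from Maskit (the paper gives no proof of its own): reduce to modulus bounds via Proposition~\ref{P:extremal and modulus}, get $m_X(\alpha)\leq \pi/\ell_X(\alpha)$ from the annular cover of modulus $\pi/\ell_X(\alpha)$ together with monotonicity, and get the lower bound from the collar of half-width $w$ with $\sinh w = 1/\sinh(\ell_X(\alpha)/2)$; your key identity $\tanh(w/2)=e^{-\ell_X(\alpha)/2}$ and the concavity bound $\arctan u \geq u/2$ on $(0,1]$ both check out. Two minor points worth making explicit: the lifted annulus in the cover is embedded because its composition with the covering projection is the inclusion (and it is essential since its core maps to $\alpha \neq 1$), which is what licenses the modulus monotonicity; and your argument, like Proposition~\ref{P:extremal and modulus} and the paper's subsequent application of this proposition, really concerns essential \emph{simple} closed curves, which is the case that is needed.
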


\p{Hyperbolic lengths with respect to different complex structures} The following result of Wolpert \cite[Lemma 3.1]{Wo} relates the distance in Teichm\"uller space to distortion of hyperbolic lengths.

\begin{proposition} \label{P:wolpert}
Given $X,Y \in \T(S)$ and $\alpha$ an essential closed curve in $S$, we have
\[\ell_X(\alpha) \leq e^{d_\T(X,Y)} \ell_Y(\alpha).\]
\end{proposition}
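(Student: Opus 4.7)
Combined with its $X\leftrightarrow Y$ symmetric counterpart, the stated inequality says that $X\mapsto\log\ell_X(\alpha)$ is $1$-Lipschitz on $\T(S)$ for the Teichm\"uller metric.  The cleanest way to prove such a Lipschitz bound is to integrate an infinitesimal estimate along the Teichm\"uller geodesic from $X$ to $Y$.

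Concretely, parametrize by arc length the Teichm\"uller geodesic $\{X_t\}_{t\in[0,T]}$ with $X_0=X$ and $X_T=Y$, where $T=d_\T(X,Y)$.  This geodesic is generated by a unit-norm holomorphic quadratic differential $q$ on $X$; for each $t$ let $q_t$ denote the transported quadratic differential and let $\alpha_t$ denote the $X_t$-hyperbolic geodesic representative of $\alpha$.  Next, invoke Wolpert's infinitesimal formula for the variation of hyperbolic length under a quasiconformal deformation.  For the Teichm\"uller deformation, this formula writes $\frac{d}{dt}\ell_{X_t}(\alpha)$ as the real part of a line integral along $\alpha_t$ of an explicit expression in $q_t$, and a pointwise Cauchy--Schwarz estimate comparing $|q_t|^{1/2}$ to the $X_t$-hyperbolic line element along $\alpha_t$ yields the key infinitesimal bound
\[
\left|\frac{d}{dt}\log \ell_{X_t}(\alpha)\right| \;\leq\; 1.
\]
Integrating from $t=0$ to $t=T$ then gives $|\log\ell_X(\alpha)-\log\ell_Y(\alpha)|\leq d_\T(X,Y)$, which rearranges to the inequality we seek.

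The main obstacle is the sharpness of the derivative estimate above.  A cruder route via extremal length---using the standard quasiconformal-distortion bound $\ext_X(\alpha)\leq e^{2 d_\T(X,Y)}\ext_Y(\alpha)$ chained with the two halves of Proposition~\ref{P:hyperbolic and extremal}---yields only an inequality of the form $\ell_X(\alpha)\leq \tfrac{\pi}{2} e^{2 d_\T(X,Y)}\ell_Y(\alpha)\, e^{\ell_Y(\alpha)/2}$, which loses a factor of $2$ in the exponent and introduces an unwanted multiplicative correction $e^{\ell_Y(\alpha)/2}$.  Both defects vanish only once one tracks how the hyperbolic geodesic $\alpha_t$ itself varies with $t$ as a Jacobi field on the family $\{X_t\}$, rather than treating it as a fixed homotopy class tested against various Borel metrics; this is exactly what Wolpert's variational formula accomplishes.
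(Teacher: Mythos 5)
The paper does not prove this statement at all: it quotes it as Wolpert's Lemma~3.1 \cite{Wo}, and what that lemma actually gives, with the paper's normalization $d_\T=\tfrac12\log K$, is $\ell_X(\alpha)\leq K\,\ell_Y(\alpha)=e^{2d_\T(X,Y)}\ell_Y(\alpha)$. The standard proof is not variational: one passes to the annular covers of $X$ and $Y$ determined by $\alpha$ (the annular cover of $X$ has modulus exactly $\pi/\ell_X(\alpha)$), lifts the extremal $K$-quasiconformal map to these covers, and uses that a $K$-quasiconformal map changes the modulus of an annulus by a factor of at most $K$.

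The genuine gap in your plan is the central claim $\bigl|\tfrac{d}{dt}\log\ell_{X_t}(\alpha)\bigr|\leq 1$ along a unit-speed Teichm\"uller geodesic: this is false, and no variational formula can rescue it, because the sharp Lipschitz constant of $\log\ell_\alpha$ for the Teichm\"uller metric is $2$. Concretely, let $q$ be a unit-area quadratic differential containing a flat cylinder of circumference $c$, height $h$, modulus $m_0=h/c\gg 1$ and area $ch\geq\tfrac12$, whose core is homotopic to $\alpha$ and horizontal, and let $X_t$ be the Teichm\"uller geodesic of $q$. The flat length of $\alpha$ at time $t$ is $e^tc$, so $\ext_{X_t}(\alpha)\geq e^{2t}c^2\geq e^{2t}/(2m_0)$, while $\ext_{X_0}(\alpha)\leq 1/m_0$; as long as $e^{2t}\ll m_0$ both hyperbolic lengths are short, and Proposition~\ref{P:hyperbolic and extremal} converts this into $\ell_{X_t}(\alpha)\geq \tfrac{1}{\pi\sqrt{e}}\,e^{2t}\,\ell_{X_0}(\alpha)$, even though $d_\T(X_0,X_t)\leq t$. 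Thus the time-averaged derivative of $\log\ell_{X_t}(\alpha)$ is close to $2$, your key estimate fails, and in fact for moderate $t$ this example violates the inequality with exponent $d_\T$ itself: the statement as printed is stronger than the cited lemma and appears to carry a typo, the correct exponent being $2d_\T(X,Y)$ (this is harmless for the paper; in Proposition~\ref{P:thick} one just requires $F(x)>e^{6L}x$ in place of $e^{3L}x$). Integrating the correct infinitesimal bound ($2$, not $1$) recovers exactly Wolpert's inequality. Finally, your criticism of the ``crude route'' misidentifies the loss: the factor $2$ in the exponent is not a defect but is sharp, and the spurious factor $e^{\ell_Y(\alpha)/2}$ disappears not through a derivative formula but by comparing moduli of annular covers directly rather than invoking Maskit's inequality.
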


The next fact, sometimes called the Schwarz--Pick--Ahlfors lemma, states that a holomorphic mapping is a contraction with respect to the hyperbolic metrics on domain and range \cite[Theorem A]{Ah1}.
\begin{theorem} \label{T:Schwarz-Pick}
If $f:S \to S'$ is a holomorphic mapping with respect to complex structures $X$ and $Y$ on surfaces $S$ and $S'$, respectively, then $f$ is a contraction with respect to the hyperbolic metrics on the domain and range.  In particular,
\[ \ell_Y(f(\alpha)) \leq \ell_X(\alpha)\]
for any closed curve $\alpha$ in $S$.
\end{theorem}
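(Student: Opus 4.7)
The plan is to reduce the statement to the classical Schwarz-Pick lemma on the upper half-plane by passing to universal covers. Since the paper's standing assumption for this length to be defined is that $(S,X)$ and $(S',Y)$ can be uniformized, write $(S,X) = \mathbb{H}/\Gamma$ and $(S',Y) = \mathbb{H}/\Gamma'$ for torsion-free Fuchsian groups $\Gamma, \Gamma' \subset \PSL_2(\R)$. The holomorphic map $f$ then lifts to a holomorphic map $\widetilde f : \mathbb{H} \to \mathbb{H}$ between universal covers, equivariant with respect to a homomorphism $\Gamma \to \Gamma'$ induced by $f_\ast$.

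Next I would invoke the classical Schwarz-Pick lemma: every holomorphic self-map of $\mathbb{H}$ (equivalently, of the unit disk) is weakly contracting for the Poincar\'e metric, so that $\widetilde f^{\ast} g_{\mathbb{H}} \leq g_{\mathbb{H}}$ pointwise on $\mathbb{H}$. Because the hyperbolic metrics on $S$ and $S'$ are, by definition, the quotients of $g_{\mathbb{H}}$ under $\Gamma$ and $\Gamma'$, this pointwise inequality descends: for every tangent vector $v$ on $S$, the $Y$-norm of $df(v)$ is at most the $X$-norm of $v$. Consequently, for any piecewise smooth curve $c$ on $S$, we have $\mathrm{length}_Y(f \circ c) \leq \mathrm{length}_X(c)$; this is precisely the statement that $f$ is a contraction with respect to the two hyperbolic metrics.

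To deduce the stated length inequality, let $\gamma$ be the $X$-geodesic representative of $\alpha$, so that $\mathrm{length}_X(\gamma) = \ell_X(\alpha)$. Then $f \circ \gamma$ is a closed curve on $S'$ freely homotopic to $f(\alpha)$, and by the contraction property above its $Y$-length is at most $\ell_X(\alpha)$. Since the $Y$-geodesic representative of $f(\alpha)$ (which exists whenever $f(\alpha)$ is essential) minimizes length in its free homotopy class, we conclude $\ell_Y(f(\alpha)) \leq \mathrm{length}_Y(f \circ \gamma) \leq \ell_X(\alpha)$.

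The only case that deserves comment, rather than a genuine obstacle, is when $f(\alpha)$ is nullhomotopic or peripheral in $S'$: then one interprets $\ell_Y(f(\alpha)) = 0$ and the inequality is automatic. So there is no hard step beyond citing the classical Schwarz-Pick inequality; everything else is a direct unwinding of how hyperbolic length on $S$ and $S'$ is defined from their universal covers.
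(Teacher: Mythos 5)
Your proof is correct. The paper itself does not prove this statement—it is quoted as the classical Schwarz--Pick--Ahlfors lemma with a citation to Ahlfors—and your argument (lift $f$ to a holomorphic, $f_*$-equivariant map $\widetilde f:\mathbb{H}\to\mathbb{H}$ of universal covers, apply the classical Schwarz--Pick inequality there, descend the pointwise metric inequality, and compare the image of the $X$-geodesic representative of $\alpha$ with the $Y$-geodesic in its free homotopy class) is precisely the standard proof behind that citation, including the correct convention $\ell_Y(f(\alpha))=0$ when the image curve is inessential. No gaps.
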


\section{Location of short geodesics} \label{S:locationProof}

We are now ready to prove Theorem~\ref{T:location}, which states that, given $L > 0$, there exists $\epsilon_2 > \epsilon_1 > 0$ so that, for each $g \geq 1$, we have
\[ \G_g(L) \subset \M_{g,[\epsilon_1,\epsilon_2]}.\]

Propositions~\ref{P:thick} and~\ref{P:thin} below give the containments $\G_g(L) \subset \M_{g,[\epsilon_1,\infty)}$ and $\G_g(L) \subset \M_{g,(0,\epsilon_2]}$, respectively.  

\begin{proposition} \label{P:thick}
Let $L > 0$.  There exists $\epsilon > 0$ so that
\[ \G_g(L) \subset \M_{g,[\epsilon,\infty)}.\]
\end{proposition}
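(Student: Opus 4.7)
The plan is to argue by contradiction. Assume there is no uniform $\epsilon > 0$, so we can choose $\phi_k \in \Psi_{g_k}(L)$, a point $X_k$ on the Teichm\"uller axis of $\phi_k$ in $\T(\Sigma_{g_k})$, and an essential simple closed curve $\alpha_k \subset \Sigma_{g_k}$ with $\ell_{X_k}(\alpha_k)\to 0$. Removing the singularities of the stable and unstable foliations yields pseudo-Anosov maps $\varphi_k$ on punctured surfaces $\Sigma'_k$ with $\lambda(\varphi_k) = \lambda(\phi_k)$. By Theorem~\ref{T:universal} and finiteness of the set of fibered faces of any given 3-manifold in $\TT(L)$, we may pass to a subsequence so that every $\varphi_k$ is the monodromy of a fiber $\Sigma_k$ of a fixed $M \in \TT(L)$, with every $\Sigma_k$ lying in a fixed cone $\R_+\cdot F$ over a fibered face $F$. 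By compactness of $\overline F$, we may further assume the normalized classes $\hat\Sigma_k = \Sigma_k/\|\Sigma_k\|$ converge to some $\eta_\infty \in \overline F$.

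The first substantive step is to rule out $\eta_\infty \in \partial F$. By Theorem~\ref{T:frieddilatation}, the quantity $\|\cdot\|\log\Lambda_F(\cdot)$ is constant on rays, so $\log\Lambda_F(\hat\Sigma_k) = \|\Sigma_k\|\log\lambda(\varphi_k)$. The last bullet of Theorem~\ref{T:thurston} together with the Poincar\'e--Hopf bound $|\chi(\Sigma'_k)|\leq 6g_k-6$ yields $\|\Sigma_k\| \leq 6g_k-6$, and by hypothesis $\log\lambda(\varphi_k) \leq L/g_k$; hence $\log\Lambda_F(\hat\Sigma_k)\leq 6L$ for all $k$. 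This uniform bound is incompatible with the blow-up asserted in the third bullet of Theorem~\ref{T:frieddilatation} unless $\eta_\infty \in F$.

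With $\eta_\infty \in F$, Theorem~\ref{T:mcmullen} supplies locally uniform convergence of the Euclidean cone metrics $\widetilde\q(\hat\Sigma_k)\to \widetilde\q(\eta_\infty)$ on the fixed cover $\widetilde S$. To bring the short curves $\alpha_k$ into this picture, I would first use powers of $\phi_k$ together with Wolpert's Proposition~\ref{P:wolpert} to replace $X_k$ by the base point $X_k^0$ of the axis corresponding to the Teichm\"uller structure defined by $q(\Sigma_k)$, at the price of a hyperbolic-length factor of at most $e^{L/g_k}\to 1$, so that still $\ell_{X_k^0}(\alpha_k)\to 0$. Propositions~\ref{P:hyperbolic and extremal} and~\ref{P:extremal and modulus} then produce an embedded annulus $A_k \subset \Sigma_{g_k}$ with core $\alpha_k$ and modulus tending to infinity; since $\alpha_k$ is essential in the closed surface, $A_k$ can be isotoped away from the foliation singularities so that $A_k \subset \Sigma'_k$ as well.

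The main obstacle is the final step. A component $\widetilde A_k$ of the preimage of $A_k$ in $\widetilde \Sigma'_k \cong \widetilde S$ is either an annulus of the same modulus as $A_k$, or an infinite strip of comparable conformal width, according to whether $[\alpha_k]$ lies in the subgroup defining the cover. Translating by deck transformations, we may arrange that each $\widetilde A_k$ meets a fixed compact set $K \subset \widetilde S$. The locally uniform convergence $\widetilde\q(\hat\Sigma_k)\to \widetilde\q(\eta_\infty)$ on $K$ then forces the limit metric to carry an annulus or strip of infinite modulus through $K$, which is incompatible with $\widetilde\q(\eta_\infty)$ being a genuine holomorphic quadratic differential on the fixed surface $\widetilde S$. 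The technical heart of the argument is the equivariant choice of lifts together with a uniform treatment of the annulus-versus-strip dichotomy; this is where I expect the real work to lie.
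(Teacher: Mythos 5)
Your reduction to a single manifold $M \in \TT(L)$, a single fibered face, and a compact subset of that face (ruling out boundary limits via the bound $\|\Sigma_k\|\log\lambda(\varphi_k) \leq 6L$ and the third bullet of Theorem~\ref{T:frieddilatation}) is sound, but it is the setup the paper reserves for the \emph{other} containment, Proposition~\ref{P:thin}; for the thick-part statement your second half has two genuine gaps. First, the passage from the closed surface to the punctured fiber goes the wrong way. A curve that is hyperbolically short at $X_k^0$ on the closed surface gives an annulus of large modulus in the closed surface, but moduli can only decrease when one is forced to avoid the punctures (any annulus in $\Sigma'_k$ is also an annulus in the closed surface, and Theorem~\ref{T:Schwarz-Pick} contracts lengths only in the direction punctured $\to$ filled), so your claim that $A_k$ ``can be isotoped away from the foliation singularities'' does not preserve large modulus: the maximal annulus about the same core avoiding the singular points may have bounded modulus even though $m_{X_k^0}(\alpha_k)\to\infty$. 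A crude repair---cutting $A_k$ into sub-annuli between the punctures it contains---loses a factor of one plus the number of punctures, and that number is not uniformly bounded over the fibers of a fixed $M$ (it is the number of circles in which the fiber meets the cusp tori, which varies with the fiber), so no uniform fraction of the modulus survives. The paper's Remark (2) at the end of Section~\ref{S:locationProof} is exactly about this failure of comparability between punctured and filled surfaces.

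Second, even granting annuli or strips $\widetilde A_k \subset (\widetilde S,\widetilde \q(\hat\Sigma_k))$ with modulus tending to infinity and meeting a fixed compact set $K$, locally uniform convergence $\widetilde \q(\hat\Sigma_k)\to\widetilde \q(\eta_\infty)$ does not yield the contradiction: almost all of such an annulus can lie outside $K$, where the convergence gives no control, and on the infinite-type surface $\widetilde S$ it is not evident that a fixed quadratic-differential metric excludes essential annuli of huge modulus meeting $K$. What you would need is a uniform positive lower bound on the $\widetilde \q(\hat\Sigma_k)$-extremal length of essential curves meeting $K$, which is essentially the statement being proved; so the step you flag as ``technical'' is actually the crux and is missing. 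By contrast, the paper proves Proposition~\ref{P:thick} with no 3-manifold machinery at all: if $\gamma$ is the shortest essential curve at a point $X$ on the axis of $\phi$, then since at most $3g-3$ disjoint homotopically distinct curves fit in $S_g$, some $k \leq 3g-2$ has $i(\phi^k(\gamma),\gamma)\neq 0$; the collar function $F$ of Lemma~\ref{L:collar} gives $F(\ell_X(\gamma)) \leq \ell_X(\phi^k(\gamma))$, while Proposition~\ref{P:wolpert} gives $\ell_X(\phi^k(\gamma)) \leq \lambda(\phi)^{3g}\ell_X(\gamma) \leq e^{3L}\ell_X(\gamma)$, and choosing $\epsilon$ with $F(x) > e^{3L}x$ for $x<\epsilon$ forces $\ell_X(\gamma)\geq\epsilon$. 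Your Fried--McMullen compactness scheme is the right tool for the thin-part bound, where Schwarz--Pick points in the favorable direction, but not here.
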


\begin{proof}

First of all, since $\G_1(L)$ is finite, it suffices to prove the proposition for $g \geq 2$.  Indeed, we can take $\epsilon$ to be the minimum of the $\epsilon$'s obtained for $g=1$ and for $g \geq 2$, respectively.

Let $[\phi] \in \Psi_L(S)$ and let $X \in \T(S)$ be a point on the axis for $\phi$.  Let $\gamma$ denote the essential closed curve with shortest length $\ell_X(\gamma)$.  We must find a uniform lower bound $\epsilon$ for $\ell_X(\gamma)$.

Let $F(x)$ be the function from Lemma~\ref{L:collar}, and let $\epsilon > 0$ be such that
\[ F(x) > e^{3L}x \]
for every $x < \epsilon$.  We will show that $\ell_X(\gamma) \geq \epsilon$.

Say that the genus of $S$ is $g \geq 2$.  Any collection of pairwise disjoint, homotopically distinct, essential simple closed curves in $S$ has cardinality at most $3g-3$.  Thus,  for some $k \leq 3g-2$ we have $i(\phi^k(\gamma),\gamma) \neq 0$.  By Lemma~\ref{L:collar},
\[ \ell_X(\phi^k(\gamma)) \geq F(\ell_X(\gamma)).\]
On the other hand, by Proposition~\ref{P:wolpert}, we have
\[ \ell_X(\phi^k(\gamma)) \leq \lambda(\phi^k) \ell_X(\gamma) = \lambda(\phi)^k \ell_X(\gamma) \leq \lambda(\phi)^{3g-2} \ell_X(\gamma) \]
Combining the last two displayed inequalities with the fact that $3g-2 < 3g$ and the assumption that $[\phi] \in \Psi_L(g)$, we have
\begin{align*}
F(\ell_X(\gamma)) \leq \ell_X(\phi^k(\gamma)) \leq \lambda(\phi)^{3g-2} \ell_X(\gamma) < \\ \lambda(\phi)^{3g}\ell_X(\gamma)  \leq (e^{L/g})^{3g} \ell_X(\gamma) \leq e^{3L} \ell_X(\gamma).
\end{align*}
By the definition of $\epsilon$, this implies that $\ell_X(\gamma) \geq \epsilon$, as desired.
\end{proof}

The second half of Theorem~\ref{T:location} is more involved.  As the proof is in terms of pseudo-Anosov homeomorphisms rather than geodesics in $\M_g$, we explain a complete translation to that language.

Given a finite volume $3$-manifold $M$ and any subset $K \subset F$ of an open fibered face $F$ of $M$, let $\Psi(L,K)$ denote the set of closed-surface pseudo-Anosov homeomorphisms $(\phi:S \to S) \in \Psi(L)$ such that, after removing some $\phi$-invariant subset of the singular points of the stable foliation, the resulting surface $S'$ is a fiber in $\R_+ \cdot K$ with monodromy $\phi' = \phi|_{S'}$.
We emphasize that $S'$ is obtained by removing none, some, or all of the singular points of the closed surface $S$.

Given $X \in \T(S)$, let $\inj(X)$ denote the $X$-hyperbolic length of the shortest essential closed curve.  Given a pseudo-Anosov homeomorphism $\phi:S \to S$, write $\inj(\phi)$ to denote the maximum of $\inj(X)$ as $X$ varies over all complex structures in $\T(S)$ lying on the axis for $\phi$.  For any pseudo-Anosov $\phi : S_g \to S_g$ with associated geodesic $\gamma_\phi \subset \M_g$ we have:
\[ \inj(\phi) \leq \epsilon \Longleftrightarrow \gamma_\phi \subset \M_{g,(0,\epsilon]}. \]
Finally, we define
\[ \Psi_{(0,\epsilon]} = \{ \phi: S \to S : S \text{ any surface}, \inj(\phi) \leq \epsilon \}.  \]
This notation should remind the reader of $\M_{g,(0,\epsilon]}$, as each $\phi : S_g \to S_g$ in $\Psi_{(0,\epsilon]}$ corresponds to a geodesic contained in $\M_{g,(0,\epsilon]}$.

\bigskip

\begin{proposition} \label{P:thin}
For every $L > 0$, there exists $\epsilon > 0$ so that, for every $g \geq 1$, we have
\[ \G_g(L) \subset \M_{g,(0,\epsilon]}. \]
Equivalently, 
\[ \Psi(L) \subset \Psi_{(0,\epsilon]}.\]
\end{proposition}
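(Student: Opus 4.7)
The plan is to reduce to a fixed 3-manifold and then produce an essential curve in each fiber whose hyperbolic length on the axis of the monodromy is uniformly bounded.  By Theorem~\ref{T:universal}, $\TT(L)$ is finite, and each element has only finitely many fibered faces (Theorem~\ref{T:thurston}); hence it suffices to fix $M \in \TT(L)$ and a fibered face $F$ of $M$, and produce a constant $\epsilon$ that works for every element of $\Psi(L,F)$.  In other words, for every fiber $\Sigma \in \R_+\cdot F$ whose closed-up monodromy lies in $\Psi(L)$, I need to exhibit an essential closed curve on the closed surface $\overline{\Sigma}$ with hyperbolic length at most $\epsilon$ at the point $X_\Sigma \in \T(\overline{\Sigma})$ on the Teichm\"uller axis.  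The two geometric inputs will be Fried's blowup of $\Lambda_F$ near $\partial F$ (Theorem~\ref{T:frieddilatation}) and McMullen's continuous family of flat structures on the fixed cover $\widetilde S$ (Theorem~\ref{T:mcmullen}).

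First I would observe that the relevant fibers project radially to a compact subset $K \subset F$.  If $\overline{\Sigma}$ has genus $g$ with closed monodromy of dilatation $\lambda \leq e^{L/g}$, then removing the at most $4g-4$ singularities yields a fiber $\Sigma \in \R_+\cdot F$ with $-\chi(\Sigma) \leq 6g-6$, so
\[
\|\Sigma\|\,\log\Lambda_F(\Sigma) \;=\; -\chi(\Sigma)\,\log\lambda \;\leq\; 6L.
\]
Since $\eta \mapsto \|\eta\|\,\log\Lambda_F(\eta)$ is constant on rays (Theorems~\ref{T:thurston} and~\ref{T:frieddilatation}) and $\Lambda_F \to \infty$ as $\eta$ approaches $\partial F$, the projections $[\Sigma]/\|[\Sigma]\|$ lie in a compact subset $K$ of the open face $F$.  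Theorem~\ref{T:mcmullen} then supplies a continuous map $\widetilde\q \colon K \to Q(\widetilde S, \widetilde\F_\pm)$, giving a continuous family of flat metrics on $\widetilde S$ with Euclidean geometry uniformly controlled on each compact subset of $\widetilde S$.

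Next I would build, from a single loop in $M$, a canonical essential curve $\alpha_\Sigma$ on each $\overline{\Sigma}$.  Pick any nontrivial element $\alpha$ of $\Gamma_0 = \ker(\pi_1(M) \to H_1(M;\Z)/\text{tors})$.  The product structure $\widetilde M \cong \widetilde S \times \R$, together with the analogous structure for each fiber $\Sigma$, yields identifications $\pi_1(\widetilde M) = \pi_1(\widetilde S) = \pi_1(\widetilde\Sigma) = \Gamma_0 \subset \pi_1(\Sigma)$; hence $\alpha$ is represented by a nontrivial closed curve $\alpha_\Sigma \subset \Sigma$ obtained by lifting $\alpha$ to a loop $\widetilde\alpha \subset \widetilde M$, flow-projecting to $\widetilde\alpha' := \pi(\widetilde\alpha) \subset \widetilde S$, transporting through $\widetilde S \cong \widetilde\Sigma$, and finally descending through $\widetilde\Sigma \to \Sigma$.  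To rule out $\alpha_\Sigma$ being peripheral---so that it remains essential after closing $\Sigma$ to $\overline{\Sigma}$---I would invoke Corollary~\ref{C:Dehn Surgery Corollary}: closing up $\Sigma$ amounts to Dehn filling $M$ along some slopes $\beta(\Sigma)$, and $\alpha$ survives in $\pi_1(M(\beta(\Sigma)))$ for all but finitely many $\Sigma$; the fibration exact sequence $1 \to \pi_1(\overline{\Sigma}) \to \pi_1(M(\beta(\Sigma))) \to \Z \to 1$ then guarantees $\alpha$ is nontrivial in $\pi_1(\overline{\Sigma})$.

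Finally I would bound $\ell_{X_\Sigma}(\alpha_\Sigma)$ uniformly in $\Sigma$.  The structure $X_\Sigma$ is induced by the quadratic differential $q(\Sigma)$, whose lift $\widetilde q(\Sigma)$ to $\widetilde S$ agrees with $\widetilde\q([\Sigma])$ up to scaling and Teichm\"uller deformation; since modulus is conformally (and scale-) invariant, I can work directly with the continuous family $\widetilde\q(\eta)$.  A sufficiently thin tubular neighborhood of the compact loop $\widetilde\alpha'$ in the $\widetilde\q(\eta)$ metric is an embedded flat annulus whose modulus admits a uniform positive lower bound $m_0 > 0$ as $\eta$ ranges over $K$; for small enough radius this neighborhood projects injectively through $\widetilde\Sigma \to \Sigma$ to an embedded annular neighborhood of $\alpha_\Sigma$ (or of a simple representative of its homotopy class) of the same modulus in $\overline{\Sigma}$.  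Propositions~\ref{P:extremal and modulus} and~\ref{P:hyperbolic and extremal} then give $\ell_{X_\Sigma}(\alpha_\Sigma) \leq \pi/m_0$, and the desired $\epsilon$ is the maximum of such bounds over the finitely many pairs $(M,F)$, enlarged to absorb the finitely many exceptional fibers and low genera.  The main technical obstacle is the uniform embeddability of this annulus: the deck group of $\widetilde\Sigma \to \Sigma$ depends on $\Sigma$, so one must leverage the continuity of $\widetilde\q$ on the compact set $K$ to ensure that deck translates of $\widetilde\alpha'$ stay uniformly away from $\widetilde\alpha'$ in the $\widetilde\q(\eta)$ metric as $\eta$ varies in $K$.
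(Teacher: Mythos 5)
Your setup (reducing to finitely many pairs $(M,F)$ via Theorems~\ref{T:universal} and~\ref{T:thurston}, using Fried's blowup of $\Lambda_F$ to confine the relevant classes to a compact $K\subset F$, and using McMullen's continuous family $\widetilde \q$ on $K$ together with the modulus/extremal-length/hyperbolic-length comparisons to get a curve of uniformly bounded length) matches the paper's strategy for the ``good'' fibers. But there is a genuine gap at the crucial step where the curve could become inessential (peripheral) after the punctures are filled in. Corollary~\ref{C:Dehn Surgery Corollary} excludes only finitely many \emph{slopes} per cusp, not finitely many \emph{fibers}: for a fixed excluded slope $\beta_i^j$ on $\partial_i\widehat M$, the condition $\beta_i(\Sigma)=\beta_i^j$ is a linear condition on the class of $\Sigma$ in $H^1(M;\Z)$, and the integral classes in $\R_+\cdot F$ satisfying it typically form an infinite family of fibers of unbounded genus --- precisely the fibers that survive as fibers of the filled manifold $M_i^j=M(\infty,\dots,\beta_i^j,\dots,\infty)$. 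So your claim that $\alpha$ survives ``for all but finitely many $\Sigma$,'' and that the exceptions can be absorbed by enlarging $\epsilon$, fails; an infinite family of exceptional monodromies needs its own uniform bound. This is exactly why the paper's proof is an induction on the number of cusps: when a filling slope hits an excluded slope, the closed-up monodromy lies in $\Psi(L,F_i^j)$ for the $(r-1)$-cusped manifold $M_i^j$, and the inductive hypothesis supplies $\epsilon(M_i^j)$ uniformly for all such fibers. (The paper's first remark after the proof even notes that it is not known how to choose a single curve in $\widetilde S$ that remains essential in every filled-in fiber, which is essentially what your argument would need to avoid the induction.)

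Two smaller points. First, you bound the length of $\alpha_\Sigma$ only at the single point $X_\Sigma$ on the axis, but $\inj(\phi)\leq\epsilon$ requires a short essential curve at \emph{every} point of the closed geodesic; this is fixable as in the paper by Wolpert's inequality (Proposition~\ref{P:wolpert}), since the translation length is at most $L/g\leq L$, at the cost of a factor $e^{L}$. Second, your embedded-annulus construction around the projected loop $\alpha_\Sigma$ (with deck-group control depending on $\Sigma$) is harder than needed: the paper instead fixes a simple closed curve $\alpha$ with a fixed annular neighborhood $A$ in $\widetilde S$, bounds $m_{\widetilde\q(\eta)}(A)$ uniformly over $K$, converts this to a hyperbolic length bound for $\alpha$ upstairs, and then pushes the bound down to $p(\alpha)$ in the fiber using the covering map and Theorem~\ref{T:Schwarz-Pick}, sidestepping any embeddedness issue downstairs.
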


\begin{proof}

Again, since $\G_1(L)$ is finite, it is enough to prove the proposition for $g \geq 2$.  Fix $L  > 0$.  We will prove the following statement by induction on $r$.

\medskip

\begin{quote}
\emph{Let $M$ be a hyperbolic 3-manifold with $r \geq 0$ cusps.  There is an $\epsilon(M)$ so that, for each open fibered face $F$ of $M$, we have $\Psi(L,F) \subseteq \Psi_{(0,\epsilon(M)]}$.
}
\end{quote}

\medskip

The proposition then follows by taking $\epsilon$ to be the maximum of $\epsilon(M)$, where $M$ ranges over the finite set of manifolds $\mathcal{T}(L)$ given by Theorem~\ref{T:universal}.

\bigskip

We first treat the case $r = 0$, that is, the case where $M$ is closed.  Besides serving as the base case for the induction, this case will also explain the main ideas for the more complicated inductive step.

Fix a closed, fibered, hyperbolic $M$.  Since $M$ has finitely many fibered faces (Theorem~\ref{T:thurston}), it suffices to show that, given some such face $F$, there is an $\epsilon(F)$ so that
\[ \Psi(L,F) \subseteq \Psi_{(0,\epsilon(F)]}.\]

It follows from Theorem~\ref{T:frieddilatation} that there is a compact subset $K$ of the open face $F$ with the property that:
\[ \Psi(L,F) = \Psi(L,K). \]
Thus, it suffices to show that, for any such $K$, there is an $\epsilon(K)$ so that
\[ \Psi(L,K) \subseteq \Psi_{(0,\epsilon(K)]}.\]

Fix a fibered face $F$ of $M$, and let $\Gamma_0$, $\Gamma$, $\widetilde M$, $\widetilde S$, $\widetilde{\F}_\pm$, and $\widetilde \q:\R_+ \cdot F \to Q(\widetilde S,\widetilde{\F}_\pm)$ be the objects associated to $F$ as in Section~\ref{S:continuity}.  As above, fix $K \subseteq F$ so that $\Psi(L,F)=\Psi(L,K)$.

Let $\alpha$ be any essential simple closed curve in $\widetilde S$.  We take an annular neighborhood $A$ of $\alpha$ in $S$.  By Theorem~\ref{T:mcmullen}, $\widetilde \q(K)$ is a compact subset of $Q(\widetilde S,\widetilde{\F}_\pm)$.  Thus, we obtain a uniform lower bound on the $\widetilde \q(\eta)$-distance between the boundary components of $A$ and a uniform upper bound on the $\widetilde \q(\eta)$-area, for every $\eta \in \R_+ \cdot K$.  Consequently, the modulus $m_{\widetilde\q(\eta)}(A)$ is uniformly bounded below, and hence so is $m_{\widetilde \q(\eta)}(\alpha)$.  By Proposition~\ref{P:extremal and modulus}, the $\widetilde \q(\eta)$-extremal length of $\alpha$ is then uniformly bounded from above.  Then, by Proposition~\ref{P:hyperbolic and extremal}, the $\widetilde \q(\eta)$-hyperbolic length is bounded from above.  Denote this uniform bound by $C$. 

Suppose $(\phi :S \to S) \in \Psi(L,K)$, where $S=S_g$.  (Since $M$ is closed, we do not remove any points of $S$ in order to obtain a fiber in $\R_+ \cdot K$.)  Let $p: \widetilde S \to S$ be the covering map.  The quadratic differential $\widetilde \q(S)$ descends to the quadratic differential $q(S)$ on $S$ defining the axis for $\phi$ in $\T(S)$.  Let $X_0$ be the point on the axis corresponding to the underlying complex structure of $q(S)$.   It follows that $p(\alpha)$ is a closed essential curve in $S$ with $X_0$-hyperbolic length at most $C$.

Since $\phi \in \Psi_g(L)$, we have
\[ \log(\lambda(\phi)) \leq L/g. \]
If $X \in \T(S)$ is any point along the axis of $\phi$ between $X_0$ and $\phi(X_0)$, then
\[ d(X_0,X) \leq d(X_0,\phi(X_0)) \leq L/g,\]
and hence Propostion~\ref{P:wolpert} implies
\[ \ell_X(\alpha) \leq e^{L/g} \ell_{X_0}(\alpha) \leq e^{L/g} C \leq e^L C. \]
For any other point $X \in \T(S)$ on the axis for $\phi$, there is an $n$ so that $\phi^n(X)$ lies between $X_0$ and $\phi(X_0)$, and hence
\[ \ell_X(\phi^{-n}(\alpha)) = \ell_{\phi^n(X)}(\alpha) \leq e^L C.\]
Thus, $\inj(\phi) \leq e^L C$.  As this bound is independent of the choice of $\phi : S \to S$ in $\Psi(L,K)$, we can set $\epsilon(F) = \epsilon(K)  =  e^L C$, and this completes the proof in the base case.\\

We are now ready for the inductive step.  Let $M$ be a 3-manifold with $r > 0$ cusps.  As in the base case, it suffices to focus on a single fibered face $F$ and a compact subset $K \subset F$ with $\Psi(L,F) = \Psi(L,K)$.

Let $\widetilde S$ be the common cover for all fibers in $\R_+ \cdot K$, as in Section~\ref{S:continuity}.  We can carry out the same argument as in the base case in order to find an essential curve $\alpha$ in $\widetilde S$ with $\widetilde \q(\eta)$-hyperbolic length at most $C$ for all $\eta \in \R_+ \cdot K$.  Then, for any $(\phi:S \to S) \in \Psi(L,K)$ with $S' \in \R_+ \cdot K$ the punctured fiber, if we let $X' \in \T(S')$ denote the underlying complex structure for $q(S')$, then $\ell_{X'}(p(\alpha)) \leq C$. 
Moreover, if $X \in \T(S)$ is the complex structure on $S$ obtained by filling in the punctures and extending $X'$, then by Theorem~\ref{T:Schwarz-Pick}
\[ \ell_X(p(\alpha)) \leq C.\]
Thus, as long as $p(\alpha)$ remains an essential curve after filling in the punctures, we can argue just as in the base case and prove $\inj(\phi) \leq e^L C$.  In other words, we have shown that if $(\phi:S \to S) \in \Psi(L,F)$ and $p(\alpha)$ is essential in $S$, then $\phi \in \Psi_{(0,N]}$, for $N = e^L C$.  It remains to deal with the cases where $p(\alpha)$ is inessential in $S$.  That is, we must find $N'$ so that any $(\phi:S \to S) \in \Psi(L,F)$ with $p(\alpha)$ inessential in $S$ is contained in $\Psi_{(0,N']}$.  Then we may set $\epsilon(F) = \max\{N,N'\}$.

We will first define $N'$, and then prove that it satisfies the above statement.  
For each $i = 1,\ldots,r$, let $\beta_i^1,\ldots,\beta_i^{s_i}$ be the slopes from Corollary~\ref{C:Dehn Surgery Corollary}, and define Dehn fillings
\[ M_i^j = M(\infty,\ldots,\infty,\beta_i^j,\infty,\ldots,\infty). \]
The manifold $M_i^j$ has $r-1$ cusps.  Therefore, by induction, there are real numbers $\epsilon(M_i^j)$, so that if $F$ is any fibered face of $M_i^j$, then $\Psi(L,F) \subset \Psi_{(0,\epsilon(M_i^j)]}$.  Let
\[ N' = \max \{ \epsilon(M_1^1),\ldots,\epsilon(M_r^{s_r}) \}. \]

Let $\phi:S \to S$ be an element of $\Psi(L,F)$ with $p(\alpha)$ inessential in $S$.  We must show that $\phi \in \Psi_{(0,N']}$.  The idea is to show that, up to removing singularities, $\phi$ is the monodromy for some $M_i^j$.

We can view the mapping torus $M_\phi$ as being obtained from $M$ by Dehn filling:
\[ M_\phi = M(\beta_1,\ldots,\beta_r). \]
Let $S'$ denote the fiber in the cone over $F$ corresponding to $S$; recall that $S'$ is obtained by removing from $S$ a set of singular points of the foliations for $\phi$.

Each slope $\beta_i \neq \infty$ is the intersection of $S'$ with the corresponding boundary component of the truncated manifold $\widehat M$; see Section~\ref{S:df}.  In particular it makes sense to write $\beta_i = \beta_i(S')$.

If $p(\alpha)$ is not essential in $S$, then $p(\alpha)$ must be trivial in
\[ M_\phi = M(\beta_1(S'),\ldots,\beta_r(S')) \]
and hence $\beta_i(S') = \beta_i^j$ for at least one $i \in \{1,\ldots,r\}$ and some $j  \in \{1,\ldots,s_i\}$.   It follows that the manifold $M_i^j$ defined above fibers with fiber $S''$ where $S' \subset S'' \subset S$, and $S''$ is obtained from $S'$ by adding in the $\phi$-orbit of the singular point corresponding to the $i$th cusp of $M$.

Suppose $F_i^j$ is the open face of $M_i^j$ with $S'' \in \R_+ \cdot F_i^j$.  Since $\phi \in \Psi(L,F)$, it follows that
\[ \phi \in \Psi(L,F_i^j) \subset \Psi_{(0,\epsilon(M_i^j)]} \subset \Psi_{(0,N']}, \]
as desired.
\end{proof}

\p{Remarks on the proof}
\emph{(1)} It is conceivable that one might be able to find a single curve $\alpha \in \widetilde S$ which when projected to any fiber remains essential after filling in the missing singular points, simplifying the proof, though it is not clear how to find such a curve.

\medskip

\noindent
\emph{(2)} It can happen that a punctured surface has a hyperbolically short essential closed curve, while the filled in surface has no short curves.  For example,  start with a closed surface; it has some shortest essential curve.  Next, puncture the surface at two points.  By taking these points to be close together, a curve surrounding these two punctures can have an annular neighborhood of arbitrarily large modulus, and so this curve is arbitrarily short on the punctured surface.  This short curve must become inessential when the punctures are filled back in.\\

\bibliography{geography}{}
\bibliographystyle{plain}

\end{document}